\documentclass[a4paper,10pt]{article}

\usepackage[affil-sl]{authblk}
\usepackage[linesnumbered,ruled,lined,commentsnumbered]{algorithm2e}
\usepackage{amsmath}
\usepackage{amssymb}
\usepackage{amsthm}
\usepackage{blkarray}
\usepackage{dsfont}
\usepackage{enumerate}
\usepackage[inline]{enumitem}
\usepackage[margin=3cm]{geometry}
\usepackage{mathtools}
\usepackage{nicefrac}
\usepackage{tikz}
\usetikzlibrary{calc}
\usetikzlibrary{trees}
\usetikzlibrary{decorations.pathmorphing, decorations.pathreplacing, decorations.shapes}
\usepackage[binary-units=true]{siunitx}
\usepackage[textsize=tiny]{todonotes}
\usepackage{wrapfig}
\usepackage{booktabs}
\usepackage{subfigure}

\definecolor{darkred}{RGB}{150, 0, 0}
\definecolor{darkgreen}{RGB}{0, 150, 0}
\definecolor{darkblue}{RGB}{0, 0, 150}
\usepackage[breaklinks=true,colorlinks,citecolor=darkgreen,linkcolor=darkred,urlcolor=darkblue,bookmarks=false]{hyperref}

\newtheorem{theorem}{Theorem}
\newtheorem{lemma}[theorem]{Lemma}
\newtheorem{remark}[theorem]{Remark}

\newtheorem{proposition}[theorem]{Proposition}

\theoremstyle{definition}
\newtheorem{example}[theorem]{Example}
\newtheorem{observation}[theorem]{Observation}


\newcommand{\R}{\mathds{R}}
\newcommand{\Z}{\mathds{Z}}
\newcommand{\B}[1]{\{0,1\}^{#1}}
\newcommand{\cube}[1]{[0, 1]^{#1}}

\newcommand{\solver}[1]{\textsc{#1}}
\newcommand{\scip}{\solver{SCIP}\xspace}

\newcommand{\sparsity}{\ensuremath{\sigma}}
\newcommand{\K}{w_3}
\newcommand{\Q}{w_2}
\newcommand{\J}{w_1}

\newcommand{\myalpha}[3]{\left(\alpha^{ #1 }_{ #2 }\right)^{ #3 }}
\newcommand{\Up}{\texttt{U}}
\newcommand{\Down}{\texttt{L}}
\newcommand{\Neutral}{\texttt{N}}
\newcommand{\myw}{v}

\newcommand{\orbisack}{O}
\newcommand{\orbisacktrans}{\bar{O}}

\DeclareMathOperator{\conv}{conv}

\renewcommand{\log}[2][]{\text{log}_{#1}\left( #2 \right)}


\newcommand{\T}{^\top}
\newcommand{\sprod}[2]{{#1}\T{#2}}
\newcommand{\define}{\coloneqq}
\newcommand{\card}[1]{\left\lvert #1 \right\rvert}

\newcommand{\ceil}[1]{\left\lceil #1 \right\rceil}
\newcommand{\brackets}[1]{\left\{ #1 \right\}}
\newcommand{\parentheses}[1]{\left( #1 \right)}

\newcommand{\naturalsto}[2][]{\left[ #2 \right]_{#1}}
\newcommand{\Oh}[1]{\mathcal{O}\parentheses{ #1 }}

\DeclareMathOperator{\argmax}{argmax}
\DeclareMathOperator{\argmin}{argmin}

\newcommand{\mygreen}{darkgreen}
\newcommand{\myred}{red}
\newcommand{\myblue}{blue}

\author[1]{Christopher Hojny}
\author[1]{C\'edric Roy}
\affil[1]{%
  Eindhoven University of Technology,
  Eindhoven, The Netherlands\\

  \emph{email} \{c.hojny, c.j.roy\}@tue.nl
}

\begin{document}

\title{Computational Aspects of Lifted Cover Inequalities for Knapsacks with Few Different Weights\footnote{This article is part of the project ``Local Symmetries for Global Success'' with project number OCENW.M.21.299, which is financed by the Dutch Research Council (NWO).}}

\date{}
\maketitle

\begin{abstract}
    Cutting planes are frequently used for solving integer programs. 
    A common strategy is to derive cutting planes from building blocks or a substructure of the integer program. 
    In this paper, we focus on knapsack constraints that arise from single row relaxations.
    Among the most popular classes derived from knapsack constraints are lifted minimal cover inequalities.
    The separation problem for these inequalities is NP-hard though, and one usually separates them heuristically, therefore not fully exploiting their potential.

    For many benchmarking instances however, it turns out that many knapsack constraints only have few different coefficients. 
    This motivates the concept of sparse knapsacks where the number of different coefficients is a small constant, independent of the number of variables present.
    For such knapsacks, we observe that there are only polynomially many different classes of structurally equivalent minimal covers.
    This opens the door to specialized techniques for using lifted minimal cover inequalities.

    In this article we will discuss two such techniques, which are based on specialized sorting methods.
    On the one hand, we present new separation routines that separate equivalence classes of inequalities rather than individual inequalities.
    On the other hand, we derive compact extended formulations that express all lifted minimal cover inequalities by means of a polynomial number of constraints.
    These extended formulations are based on tailored sorting networks that express our separation algorithm by linear inequalities.
    We conclude the article by a numerical investigation of the different techniques for popular benchmarking instances.
\end{abstract}

\newpage
\section{Introduction}

\noindent
We consider binary programs~$\max \{ \sprod{d}{x} : Ax \leq b,\; x \in \B{n}\}$, where~$A \in \R^{m \times n}$, $b \in \R^m$, and~$d \in \R^n$.
A standard technique to solve such problems is branch-and-bound~\cite{doig1960algorithms}.
Among the many techniques to enhance branch-and-bound, one popular class are cutting planes.
These are inequalities~$\sprod{c}{x} \leq \delta$ that are satisfied by each feasible solution of the binary program, but which exclude some points of the LP relaxation.
Cutting planes turn out to be a crucial component of modern branch-and-bound solvers, since disabling them may degrade the performance drastically~\cite{bixby2012brief}.

Many families of cutting planes are known in the literature.
In this article, we focus on cutting planes arising from knapsack polytopes, which are among the most extensively studied \cite{balas1975facets,boyd1993generating,boyd1994fenchel,crowder1983solving,gu1999lifted,hojny2020knapsack,wolsey2014integer}.
A \emph{knapsack set} is a set~$K^{a,\beta} = \brackets{ x \in \B{n} : \sprod{a}{x} \leq \beta }$ for some non-negative vector~$a \in \Z^n$ and positive integer~$\beta$; the corresponding \emph{knapsack polytope} is~$P^{a, \beta} = \conv(K^{a, \beta})$, where~$\conv(\cdot)$ denotes the convex hull operator.
Note that any cutting plane derived from knapsack sets can be used for general binary programs by considering a single row of the inequality system~$Ax \leq b$ (after possibly complementing some variables).
A popular class of knapsack-based cutting planes are derived from so-called covers.
A \emph{cover} is a set~$C\subseteq [n] \define \{1,\dots,n\}$ with~$\sum_{i\in C} a_i > \beta$.
The corresponding \emph{cover inequality}~\cite{balas1975facets,balas1978facets,wolsey1975faces} is~$\sum_{i \in C} x_i \leq \card{C} - 1$, which implies that not all elements in~$C$ can simultaneously attain value~1.
It is easy to show that, given two covers~$C, C'$, the cover inequality for~$C$ can be dominated by the inequality for~$C'$ if~$C'\subsetneq C$.
This motivates to consider covers~$C$ that are \emph{minimal}, i.e., no proper subset of~$C$ is a cover.
To strengthen these inequalities even further, so-called sequential lifting~\cite{padberg1975note} can be used to turn a cover inequality for a minimal cover~$C$ into a facet-defining inequality
\begin{equation}
    \label{eq:liftedCover}
    \sum_{i \in C} x_i
    +
    \sum_{i \in [n] \setminus C} \alpha_i x_i
    \leq
    \card{C} - 1
\end{equation}
for the knapsack polytope, i.e, the inequality cannot be dominated by other inequalities.

To use lifted cover inequalities (LCIs) as cutting planes, one could, in principle, fully enumerate and add them to the binary program.
However, since there might be exponentially many covers, this is practically infeasible.
Alternatively, one could add violated LCIs dynamically during the solving process.
Deciding whether a violated LCI exists, is NP-complete~\cite{del2023complexity} though.
In practice, one therefore usually adds violated LCIs heuristically~\cite{kaparis2010separation}.
For many knapsacks arising from the MIPLIB~2017~\cite{MIPLIB} test set, however, we made an important observation:
they only have very few different coefficients, say less than five.
Indeed, as illustrated by Figure~\ref{fig:distributionKnapsack},
approximately~\SI{41}{\percent} of all knapsacks have at most~4 different
coefficients\footnote{In~\ref{sec:appendix}, we provide details on how
  these numbers have been found.}.
Moreover, although the number of different coefficients can be small, the
knapsacks still can contain hundreds or thousands of variables, e.g.,
\SI{40}{\percent} of knapsacks with only four different coefficients
contain more than 120 variables, see Figure~\ref{fig:lengthKnapsack}.
To the best of our knowledge, this structure is not exploited in integer
programming solvers.

\begin{figure}[t]
  \centering
  \subfigure[Absolute frequency.]{
    \includegraphics[scale=0.46]{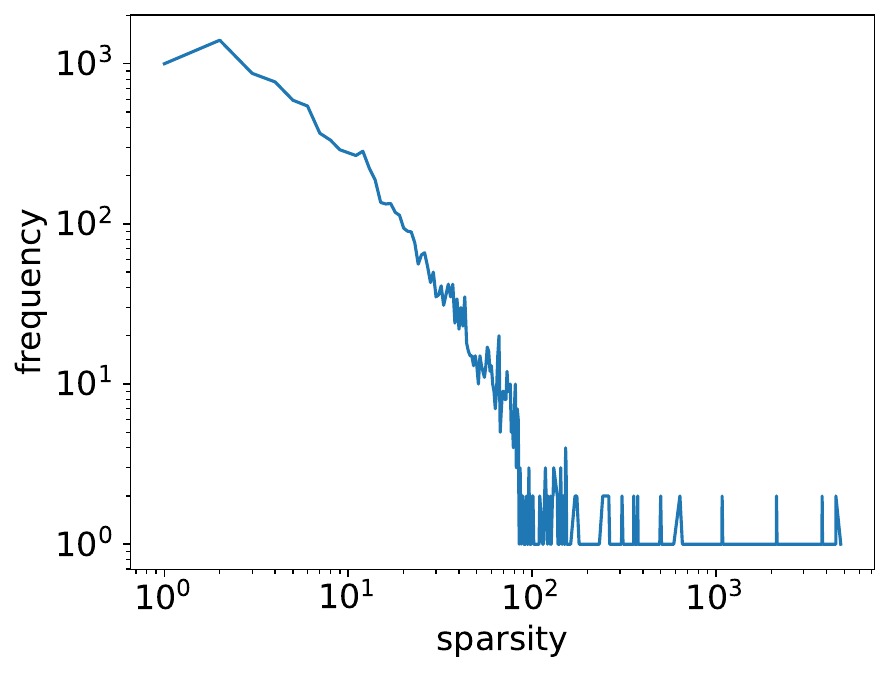}
  }
  \subfigure[Cumulative frequency in percent.]{
    \includegraphics[scale=0.46]{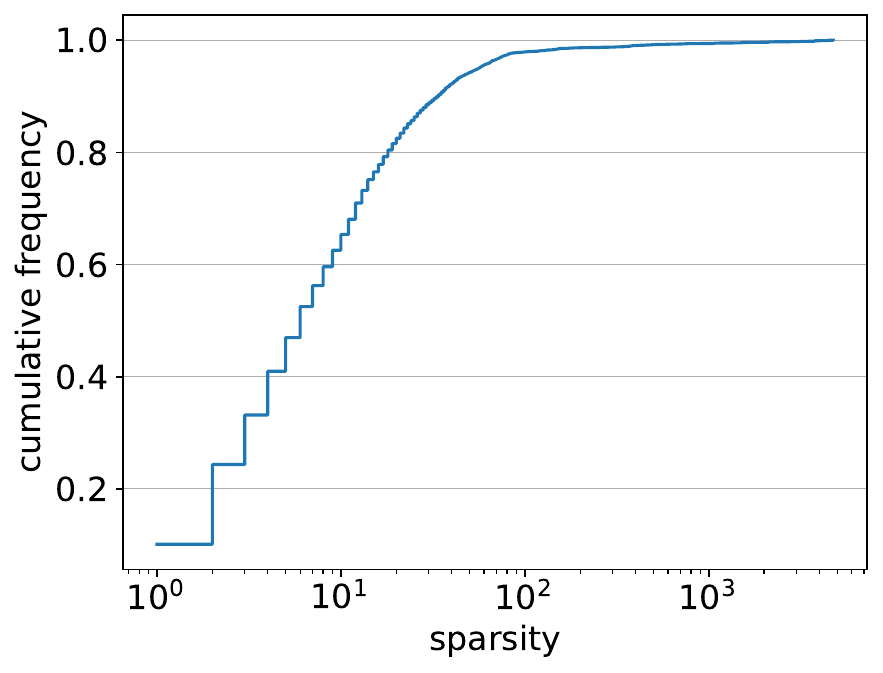}
  }
  \caption{Illustration of the frequencies at which sparse knapsacks arise
    in MIPLIB~2017.
    The horizontal axis shows the sparsity and the vertical axis plots the
    (cumulative) frequency of the respective sparsity level.
  }
  \label{fig:distributionKnapsack}
\end{figure}

\begin{figure}[t]
  \centering
  \subfigure[Sparsity 3.]{
    \includegraphics[scale=0.45]{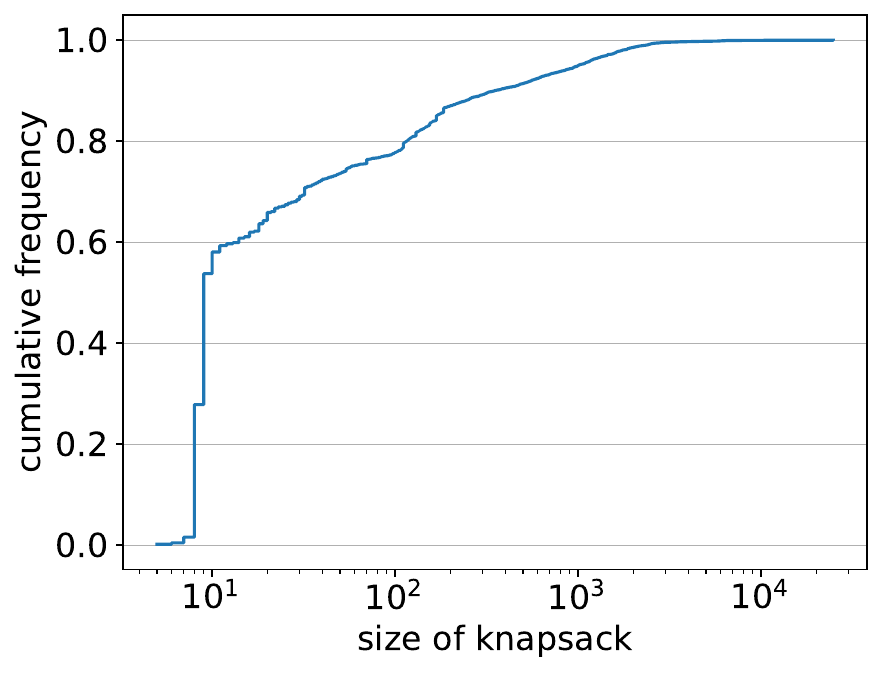}
  }
  \subfigure[Sparsity 4.]{
    \includegraphics[scale=0.45]{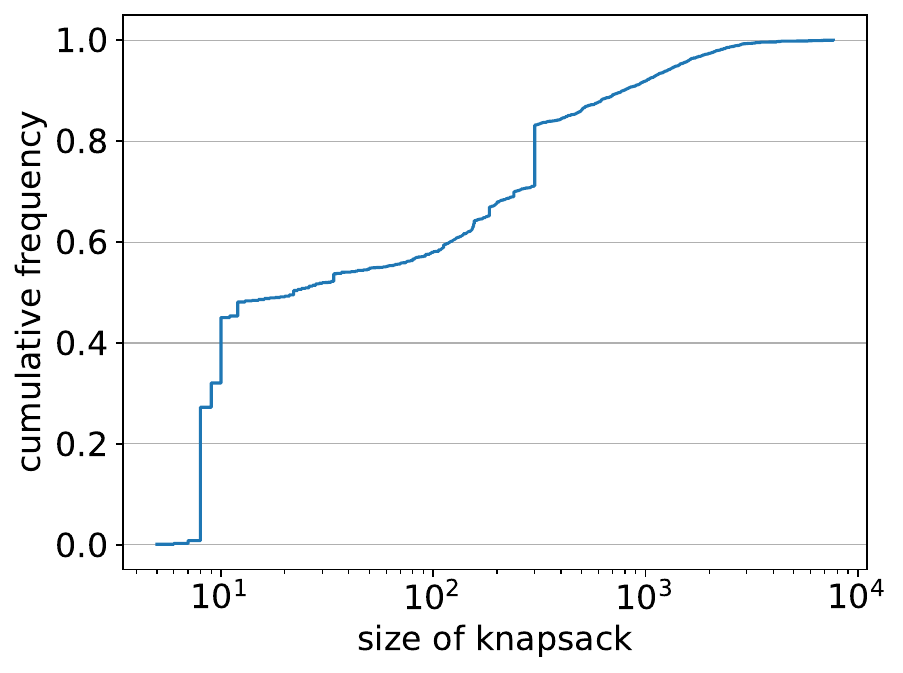}
  }
  \caption{For fixed sparsity of~3 and~4, illustration of the cumulative
    frequency at which knapsacks of a specific size arise in MIPLIB~2017.
    The horizontal axis shows the size of the knapsack (number of
    variables) and the vertical axis plots the cumulative frequency of
    knapsacks of this size.
  }
  \label{fig:lengthKnapsack}
\end{figure}

We therefore investigate so-called sparse knapsacks in this article.
A knapsack with inequality~$\sum_{i = 1}^n a_i x_i \leq \beta$ is called \emph{$\sparsity$-sparse} if the number of different coefficients is at most~\sparsity.
After introducing some notation, in Section~\ref{sec:lifted-cover-inequality-separation-for-sparse-knapsack} we show how the simplified structure of sparse knapsacks allows for solving the separation problem for LCIs in polynomial time (Theorem~\ref{thm:complexitySparse}).
In Section~\ref{sec:Sorting-network} we propose a polyhedral model for this the separation procedure, using sorting networks.
We have implemented our techniques for sparse knapsacks in the academic solver \scip~\cite{bolusani2024scip} and give an overview of it in Section~\ref{chap:practical-implementation}.
In Section~\ref{sec:numerics}, we report on numerical experience, showing, among others, that exactly separating LCIs for sparse knapsacks can substantially improve the performance of \scip.

\paragraph{Related Literature}
In the following we provide an overview of cutting planes derived from knapsack polytopes.
We refer the reader to the survey~\cite{hojny2020knapsack} for a more detailed discussion.
Deriving inequalities from covers~\cite{balas1975facets,balas1978facets,wolsey1975faces} is a well-known topic in the domain of integer programming. 
These cover inequalities can be strengthened by lifting all the coefficients for variables not in~$C$. 
There exist facet-defining lifting sequences~\cite{padberg1975note,zemel1989easily}, so-called \emph{down-lifting} sequences~\cite{chen2021complexity,wolsey2014integer}, or even simultaneous lifting procedures~\cite{easton2008simultaneously,gu1999lifted,letchford2019lifted,marchand2002cutting,prasad2024newsequenceindependentliftingtechniques,wolsey1977valid}.
Additionally, there also exist lifting techniques for variations of the original problem, such as liftings for non-minimal covers~\cite{letchford2019lifted} or liftings for 0/1-coefficient polytopes~\cite{peled1977properties}.
Balas and Zemel~\cite{balas1978facets} gave a complete description of the facet-defining inequalities arising from lifted cover inequalities.
Deciding whether a given inequality is an LCI is polynomial time~\cite{hartvigsen1992complexity}, but the problem of separating cutting planes for knapsack polytopes is known to be NP-complete~\cite{del2023complexity,fereirra1994combinatorial,klabjan1998complexity}.
For this reason, LCIs are usually separated heuristically~\cite{gu1999lifted,hoffman1991improving}.
Next to LCIs, further cutting planes are discussed, among others, merged cover inequalities~\cite{hickman2015merging}, $(1, k)$-configurations~\cite{padberg19801,gottlieb1988facets}, coefficient increased cover inequalities~\cite{dietrich1992tightening}, lifted pack inequalities~\cite{atamturk2005cover,weismantel19970}, weight inequalities~\cite{weismantel19970}, Gomory cuts~\cite{glover1997generating}, and exact separation~\cite{andrew1992pseudopolynomial,boyd1993generating,boyd1994fenchel}.

\paragraph{Basic Definitions and Notation}
Just as we use~$\naturalsto{n}$ as shorthand for the set of positive naturals~$\brackets{1, \dots, n}$, let~$\naturalsto[0]{n}\define\naturalsto{n}\cup\brackets{0}$.
Without loss of generality, all the knapsack constraints we will discuss will neither be trivial, thus implicitly satisfying $\sum_{i=1}^n a_i > \beta$, nor have trivial variables, which means $0 < a_i \leq \beta$ for all $i\in\naturalsto{n}$.
Given a set of values~$\brackets{a_1, \dots, a_n}$ and a set of indices~$C\subseteq\naturalsto{n}$, we use the shorthand~$a(C)\define\sum_{i\in C} a_i$.
Similarly, for a permutation~$\gamma$ of~$\naturalsto{n}$, we denote~$\gamma(C)\define\brackets{\gamma(i):i\in C}$.

\section{Lifted Cover Inequality Separation for Sparse Knapsacks}
\label{sec:lifted-cover-inequality-separation-for-sparse-knapsack}

\noindent
Throughout this section, let~$a \in \Z_+^n$ and~$\beta \in \Z_+$.
To make use of LCIs for the knapsack~$K^{a,\beta}$ when solving binary programs, we have mentioned two approaches in the introduction.
One either explicitly enumerates all minimal covers and computes all their liftings, or one adds LCIs dynamically during the solving process.
The latter approach requires to solve the so-called \emph{separation problem}, i.e., given a vector~$\bar{x} \in \R^n$, we need to decide whether there exists an LCI which is violated by~$\bar{x}$.
For general knapsacks, both approaches have their drawbacks:
explicit enumeration may need to find exponentially many minimal covers, and solving the separation problem is NP-complete~\cite{klabjan1998complexity} in general.

Based on our observation that many knapsacks in instances from MIPLIB~2017 are sparse, this section's goal is to understand the complexity of separating LCIs for sparse knapsacks.
The main insight of this section is that the separation problem can be
solved in polynomial time.
Although the proof is not difficult, we are not aware of any reference
explicitly discussing this case.
To be self-contained, we provide here a full proof, which also introduces the concepts needed in Section~\ref{sec:Sorting-network}.
\begin{theorem}
  \label{thm:complexitySparse}
  Let~$a \in \Z_+^n$ and let~$\beta, \sparsity$ be positive integers such
  that~$a$ is~$\sparsity$-sparse.
  Then, the separation problem of LCIs for~$K^{a, \beta}$ can be solved in~$\Oh{\sparsity^2 n^{2\sparsity}}$.
\end{theorem}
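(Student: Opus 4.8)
The plan is to exploit that a \sparsity-sparse weight vector~$a$ partitions~$\naturalsto{n}$ into at most~\sparsity\ classes of equal weight, so that the minimal covers of~$K^{a,\beta}$ and their sequential liftings are both governed by integer count vectors whose number is polynomial for fixed~\sparsity.

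First I would fix the partition~$\naturalsto{n} = N_1 \cup \dots \cup N_p$, $p \leq \sparsity$, where all indices of~$N_j$ carry the common weight~$w_j$, and put~$n_j \define \card{N_j}$. Up to permuting indices inside classes, a minimal cover is described by a \emph{type}~$\kappa = (k_1, \dots, k_p)$ with~$0 \leq k_j \leq n_j$, $\sum_j k_j w_j > \beta$, and~$\sum_j k_j w_j - \min\brackets{w_j : k_j > 0} \leq \beta$, the last condition expressing minimality. There are at most~$\prod_j (n_j + 1) = \Oh{n^\sparsity}$ types, enumerable within this bound. I would then pre-sort each class by the values of the point~$\bar x$ to be separated, in~$\Oh{n \log n}$ time in total.

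The heart of the matter is to compute, for a fixed type~$\kappa$, the largest violation over all LCIs obtained from a minimal cover of type~$\kappa$. The key structural observation is that, by symmetry inside each class, the coefficients produced by sequential up-lifting depend only on the weights and on the order in which the classes are processed, not on the concrete choice of cover variables: for a fixed cover type and a fixed order of the~$p$ classes, lifting the variables of class~$N_j$ one after another yields a sequence of coefficients~$\alpha^{(1)}_j \geq \alpha^{(2)}_j \geq \dots$ that is non-increasing (each successive lifting is computed against a larger inner maximization problem, whose optimum can only grow), and this sequence is determined by~$\kappa$ and by the earlier classes alone. Consequently, for fixed~$\kappa$ and fixed class order the full multiset of left-hand side coefficients is determined, and the largest violation is obtained by distributing, inside each class, the~$k_j$ cover coefficients~$1$ together with the lifted sequence onto the~$n_j$ sorted~$\bar x$-values according to the rearrangement inequality. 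It remains to note that each lifting subproblem is itself a maximization over a~\sparsity-sparse set, solvable by scanning its~$\Oh{n^\sparsity}$ count vectors, and that the subproblems needed for a whole class can be organized with incremental bookkeeping so that a single pass handles the class within~$\Oh{\sparsity n^\sparsity}$. Summing over the constantly many (at most~$\sparsity!$) class orders and the~$\Oh{n^\sparsity}$ types, and using that evaluating one configuration costs~$\Oh{\sparsity^2 n^\sparsity}$, gives the bound~$\Oh{\sparsity^2 n^{2\sparsity}}$; a violated LCI exists iff some type produces a left-hand side value exceeding~$\card{C} - 1$.

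The step I expect to be the main obstacle is exactly this lifting analysis. One must make precise that it suffices to consider lifting orders in which the classes are processed consecutively --- i.e.\ that interleaving variables of different weights can never yield a strictly more violated LCI than some class-consecutive order --- establish the monotonicity of the within-class coefficient sequences and their dependence on counts only, and argue that the required sequence of sparse-knapsack subproblems for a class can be evaluated in~$\Oh{\sparsity n^\sparsity}$ rather than~$\Oh{n \cdot n^\sparsity}$ time. The remaining ingredients --- counting and enumerating types, sorting the classes, and comparing violations --- are routine.
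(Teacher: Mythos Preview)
Your outline diverges from the paper's argument at exactly the point you flag as the obstacle, and the paper's route shows why that obstacle is best avoided rather than confronted. The paper does \emph{not} reason about sequential lifting orders at all. Instead it invokes the Balas--Zemel/Nemhauser--Vance characterization (Theorem~2 in the paper): every facet-defining LCI for a minimal cover~$C$ is of the form
\[
\sum_{i\in C} x_i + \sum_{i\in S}(\pi_i+1)x_i + \sum_{i\notin C\cup S}\pi_i x_i \le \card{C}-1,
\]
where~$\pi_i$ is the explicit Balas--Zemel coefficient and~$S$ is a \emph{maximal independent set} defined by the concrete inequality~\eqref{eq:def_indep_set}. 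Since the~$\pi_i$ and the independence condition depend only on weights, an LCI is determined up to within-class permutation by the count vectors~$(c_1,\dots,c_\sparsity)$ of~$C$ and~$(s_1,\dots,s_\sparsity)$ of~$S$. The paper then simply enumerates the~$\Oh{n^\sparsity}$ candidate~$S$-types per cover type, checks independence and maximality dynamically against a precomputed table for~$\mu$, and evaluates the strongest representative of each~$(C,S)$-class on the pre-sorted point~$\hat x$. No lifting subproblems are ever solved.

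Your approach, by contrast, tries to \emph{generate} the LCIs via sequential lifting along class-consecutive orders. The difficulty you anticipate is real and, I believe, fatal in the stated form: for a fixed cover type, a class-consecutive order deterministically produces a single independent-set type (once the class order is fixed, the within-class lifting sequence is forced by symmetry, as you note). Hence the~$\sparsity!$ class orders yield at most~$\sparsity!$ independent-set types, whereas the number of maximal independent-set types can be much larger (it is~$\Oh{n^\sparsity}$ in the paper's analysis, and Section~4 of the paper exhibits a genuinely tree-shaped search over them). So class-consecutive orders do not in general cover all facet-defining LCIs, and there is no evident reason the most violated one must be among those~$\sparsity!$. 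Your proposed fix --- proving that interleaving never helps for separation --- would need an argument that is not visible from the sketch. A secondary bookkeeping issue: carrying a~$\sparsity!$ factor while simultaneously reporting the dependence on~$\sparsity$ as~$\sparsity^2$ is inconsistent; the paper's~$\sigma^2$ comes from two nested~$\Oh{\sparsity n^\sparsity}$ enumerations, not from lifting orders.

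In short: replace the lifting-order enumeration by the closed-form description of LCIs via~$(\pi,S)$, and enumerate~$S$-types directly. That removes the obstacle entirely and yields the stated bound cleanly.
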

This result complements other results on polynomial cases of the separation problem, namely separating variants of LCIs for points~$\bar{x}$ with a constant number of non-integral entries~\cite{del2023complexity}.
That is, only constantly many entries of~$\bar{x}$ are non-zero.

The rest of this section is structured as follows.
We start by providing an explicit definition of LCIs in Section~\ref{sec:backgroundLCI}.
Afterward, Section~\ref{sec:proofComplexity} provides the proof of Theorem~\ref{thm:complexitySparse}.

\subsection{Background on Lifted Cover Inequalities}
\label{sec:backgroundLCI}

\noindent
Let~$C$ be a minimal cover of the knapsack~$K^{a,\beta}$.
Recall that its minimal cover inequality is given by~$x(C) \leq \card{C} - 1$.
In general, this inequality can be weak.
To possibly turn it into a stronger inequality, one can assign coefficients~$\alpha_i$ to the variables~$x_i$ not contained in the cover~$C$, leading to an inequality
\begin{equation}
    \sum_{i \in C} x_i + \sum_{i\notin C} \alpha_i x_i \leq |C| - 1.
    \label{eq:basic_LCI}
\end{equation}
The approach of finding the values of~$\alpha_i$ is called lifting.
Among many existing methods to get these coefficients \cite{balas1975facets,chen2021complexity,easton2008simultaneously,letchford2019lifted,marchand2002cutting,padberg1975note,zemel1989easily}, we will focus on the so-called \emph{sequential lifting} procedure that is guaranteed to yield LCIs that define facets of the knapsack polytope~$P^{a,\beta}$.
This procedure has been developed in~\cite{balas1975facets,padberg1973facial} to define some lifting coefficients.
Later, \cite{balas1978facets,nemhauser1994lifted} provide a full characterization for computing simultaneously all lifting coefficients that yield facet defining inequalities.

To describe the characterization of lifting coefficients, assume~$C = \{j_1,\dots,j_{\card{C}}\}$ such that~$a_{j_i} \geq a_{j_{i+1}}$ for all~$i \in [\card{C} - 1]$. 
For any non-negative integer~$h$, we let~$\mu(h)$ be the sum of the~$h$ heaviest elements in the cover, i.e.,
\begin{equation}
    \mu(h) \define \sum_{i = 1}^{\min\{h, \card{C}\}} a_{j_i}.
    \label{eq:def_of_mu}
\end{equation}
In particular, $\mu(0) = 0$.
These values are used to define, for each~$i \notin C$, preliminary lifting coefficients~$\pi_i \define \max\brackets{h\in \Z : a_i \geq \mu(h)}$.
That is,~$\sum_{i \in C} x_i + \sum_{i \notin C} \pi_ix_i \leq \card{C} - 1$ is valid for~$K^{a,\beta}$, but not necessarily facet defining.
To make these inequalities facet defining, \cite{balas1978facets} has shown that some coefficients~$\pi_i$ need to be increased by~1.
More concretely, for every LCI~\eqref{eq:basic_LCI} defining a facet of~$P^{a,\beta}$, there exists a subset~$S\subseteq \naturalsto{n}\setminus C$ such that~$\alpha_i = \pi_i$ if~$i\notin S$ and~$\alpha_i = \pi_i +1$ if~$i\in S$.
Furthermore,~\cite{nemhauser1994lifted} identified a necessary and sufficient criterion for these sets via a concept called independence.
A set~$S\subseteq N\setminus C$ is called \emph{independent} if for any subset~$Q\subseteq S$ we have 
\begin{equation}
    \sum_{i\in Q} a_i > \mu\left(\sum_{i\in Q} (\pi_i +1)\right) - \Delta(C),
    \label{eq:def_indep_set}
\end{equation} 
where~$\Delta(C)$ denotes the difference between the weight of the cover and the capacity of the knapsack.
An independent set~$S$ is called \emph{maximal} if there is no other independent set containing~$S$.
The characterization of~\cite{nemhauser1994lifted} reads then as follows:
\begin{theorem}[\cite{nemhauser1994lifted}]
  Let~$a \in \Z_{+}^n$ and let~$\beta$ be an integer satisfying~$\beta \geq a_i$ for all~$i \in \naturalsto{n}$.
  Then,
    \begin{equation}
        \sum_{i\in C} x_i + \sum_{i\in S} (\pi_i + 1) x_i + \sum_{i\notin C\cup S} \pi_i x_i \leq |C| - 1
        \label{eq:cover_full_ineq}
    \end{equation}
    defines a facet of~$P^{a, \beta}$ if and only if~$C$ is a minimal cover and~$S$ a maximal independent set.
\end{theorem}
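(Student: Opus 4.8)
The plan is to first reduce facet-ness to a manageable criterion and then split the equivalence into a validity part and a dimension part. Under the stated non-triviality assumptions the unit vectors $e_1,\dots,e_n$ are feasible (since $\beta\geq a_i$) and, together with the origin, affinely independent, so $P^{a,\beta}$ is full-dimensional; hence a valid inequality defines a facet exactly when its induced face contains~$n$ affinely independent feasible points. I would take as given the coefficient form recalled before the theorem (Balas--Zemel): every facet-defining lifted cover inequality has coefficients~$\pi_i$ off the cover and~$\pi_i+1$ on some set~$S$. It therefore remains to characterise, among the inequalities of the form~\eqref{eq:cover_full_ineq}, which pairs~$(C,S)$ are valid and facet-defining. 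Throughout I would also use that the restriction of $P^{a,\beta}$ to $\{x_i=0:i\notin C\}$ is the full-dimensional knapsack polytope on $C$, on which the seed $x(C)\leq\card{C}-1$ is a facet precisely when $C$ is a minimal cover.

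The core step is to show that~\eqref{eq:cover_full_ineq} is valid if and only if~$S$ is independent. Writing~$\Delta(C)=a(C)-\beta>0$, I would determine the cheapest feasible way to drive the left-hand side to the forbidden value~$\card{C}$: switching on a subset~$Q\subseteq S$ contributes~$\sum_{i\in Q}(\pi_i+1)$, and the remaining value is most cheaply supplied by the lightest cover elements, at total weight~$a(Q)+a(C)-\mu(\sum_{i\in Q}(\pi_i+1))$. Rearranging the independence inequality~\eqref{eq:def_indep_set} shows that this weight exceeds~$\beta$ exactly when~$S$ is independent, i.e.\ every point reaching value~$\card{C}$ is infeasible. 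The delicate point is to argue that variables in~$\naturalsto{n}\setminus(C\cup S)$ cannot help to violate the inequality: their coefficient~$\pi_i=\max\{h\in\Z:a_i\geq\mu(h)\}$ is the break-even value built into the preliminary lifting, so trading such a variable against cover elements never raises value per unit weight, which reduces the whole analysis to subsets of~$C\cup S$.

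For the implication that a minimal cover with a maximal independent set yields a facet, I would exhibit~$n$ affinely independent tight points. Minimality makes the~$\card{C}$ incidence vectors~$\chi^{C\setminus\{j\}}$, $j\in C$, feasible (as $a(C)-a_j\leq\beta$); they satisfy~\eqref{eq:cover_full_ineq} with equality and are affinely independent. For each~$k\notin C$ I then produce a tight feasible point with~$x_k=1$, introducing a fresh coordinate. For~$k\in S$ the definition of~$\pi_k$ together with minimality, which bounds~$\Delta(C)$ by the smallest cover weight, lets me switch on~$k$ at coefficient~$\pi_k+1$ with the lightest cover elements to reach value~$\card{C}-1$ feasibly; for~$k\notin S$ maximality forces~$S\cup\{k\}$ to be dependent, and the witnessing subset~$Q\ni k$ of~\eqref{eq:def_indep_set} yields a feasible point that switches on all of~$Q$ and light cover elements and is tight with~$x_k=1$. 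Verifying these weight estimates with~$\mu$ and~$\Delta(C)$, and arranging the points (via a lifting order, so that each newly lifted variable is the first with a nonzero coordinate) so that the resulting triangular structure makes the family affinely independent, is the main bookkeeping.

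For the converse I would argue contrapositively on each condition. If~$C$ is not minimal it contains a proper sub-cover~$C'$, and on the restriction the seed equals $x(C')\leq\card{C'}-1$ plus the bounds $x_i\leq1$ for $i\in C\setminus C'$; being a non-trivial non-negative combination it is not facet-defining for the restriction, and since lifting cannot turn a non-facet seed into a facet,~\eqref{eq:cover_full_ineq} is not facet-defining. If~$S$ is independent but not maximal, pick~$k$ with~$S\cup\{k\}$ independent; then the inequality obtained by raising~$x_k$ to coefficient~$\pi_k+1$ is valid, and~\eqref{eq:cover_full_ineq} equals it plus the bound~$-x_k\leq 0$, so any point tight for~\eqref{eq:cover_full_ineq} with~$x_k=1$ would violate this stronger valid inequality; hence the face lies in~$\{x_k=0\}$ and has dimension at most~$n-2$. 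Thus both minimality of~$C$ and maximality of~$S$ are necessary, completing the equivalence. I expect the validity--independence equivalence of the second step, together with the tight-point construction under maximality, to be the crux; the remaining arguments are then short domination and dimension counts.
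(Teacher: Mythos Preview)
The paper does not prove this theorem: it is quoted from Nemhauser and Vance~\cite{nemhauser1994lifted} as background in Section~\ref{sec:backgroundLCI}, and the paper's own proofs begin only with Theorem~\ref{thm:complexitySparse}. There is therefore no in-paper argument to compare your proposal against.

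That said, your outline follows the standard route one finds in the original reference and in textbook treatments: full-dimensionality via the unit vectors, the equivalence between validity of~\eqref{eq:cover_full_ineq} and independence of~$S$ through the cheapest-violation argument with~$\mu$ and~$\Delta(C)$, an explicit family of~$n$ tight affinely independent points for the facet direction, and domination arguments for necessity. Two places deserve more care if you flesh this out. First, the claim that variables in~$\naturalsto{n}\setminus(C\cup S)$ can be ignored in the validity analysis is only sketched; the clean way is to invoke that the preliminary inequality with all~$\pi_i$ is already valid (Balas--Zemel), so any violation of~\eqref{eq:cover_full_ineq} must come from the~$+1$ on~$S$, which localises the analysis to~$Q\subseteq S$. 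Second, your tight-point construction for~$k\in S$ (switching on~$k$ and the lightest cover elements) needs the bound~$a_k + a(C) - \mu(\pi_k+2) \leq \beta$, which does not follow from~$a_k<\mu(\pi_k+1)$ alone; one either uses that~$\{k\}$ independent gives~$a_k>\mu(\pi_k+1)-\Delta(C)$ together with concavity of~$\mu$, or falls back on the sequential-lifting definition of~$\pi_k+1$ to extract the tight point directly. With those two points tightened, the argument is complete and matches the literature.
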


\subsection{Proof of Theorem~\ref{thm:complexitySparse}}
\label{sec:proofComplexity}

\noindent
Before proving Theorem~\ref{thm:complexitySparse}, we note that sparsity of a knapsack does not rule out the existence of super-polynomially many minimal covers as demonstrated by the following example.
\begin{example}
  Let~$n$ and~$k$ be positive integers with~$k\leq n$.
  The knapsack~$\sum_{i=1}^{n-1} x_i + 2 x_n \leq k$ has sparsity~$2$
  and two types of minimal covers: selecting~$k+1$ elements of weight~$1$ or selecting~$k-1$ elements of weight~$1$ and the element of weight~$2$.
  This means that there are~$\binom{n-1}{k+1} + \binom{n-1}{k-1}$ possible minimal covers.
\end{example}
As the example illustrates, it makes sense not to consider minimal covers independently, but to group them into families of similarly structured covers.
This way, we might be able to reduce an exponential number of covers to polynomially many families of covers, and the separation problem can be solved within each family independently.
To prove Theorem~\ref{thm:complexitySparse}, we will follow this idea.
It will therefore be convenient to group variables~$x_i$ by their knapsack coefficient~$a_i$, to which we refer to in the following as \emph{weights}.
Let~$W = \{a_i : i \in \naturalsto{n}\}$ be the set of distinct weights and let~$\sparsity = \card{W}$.
Assume~$W = \{w_1,\dots,w_\sparsity\}$ with~$w_1 < w_2 < \dots < w_\sparsity$, and define, for~$j \in [\sparsity]$, $W_j = \{i \in \naturalsto{n} : a_i = w_j\}$.
The knapsack inequality can then be rewritten as 
\[
  \sum_{j=1}^\sparsity w_j  x(W_j) \leq \beta.
\]
Based on this representation, we define an equivalence relation~$\sim$ on the power set of~$\naturalsto{n}$ as follows.
For two sets~$A, A' \subseteq \naturalsto{n}$, we say~$A\sim A'$ if and only if~$\card{A\cap W_j} = \card{A'\cap W_j}$ for all~$j \in [\sparsity]$.
We collect some basic facts about this equivalence relation.
\begin{observation}
  Let~$a \in \Z_+^n$, $\beta \in \Z_+$ such that~$a$ is~$\sparsity$-sparse, and let~$C$ be a minimal cover of~$K^{a,\beta}$.
  \begin{enumerate}
  \item If~$C' \subseteq \naturalsto{n}$ satisfies~$C \sim C'$, then~$C'$ is a minimal cover.
  \item Let~$\gamma$ be a permutation of~$\naturalsto{n}$ such that~$\gamma(W_j) = W_j$ for all~$j \in [\sparsity]$.
    Then,~$\gamma(C)$ is a minimal cover of~$K^{a,\beta}$ with corresponding cover inequality
    \begin{equation}
      \label{eq:permuted_cover_ineq}
      \sum_{i\in C} x_{\gamma(i)} \leq \card{C} - 1.
    \end{equation}
  \end{enumerate}
\end{observation}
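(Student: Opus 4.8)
The plan is to verify each of the two claims directly from the definitions, exploiting the fact that the equivalence relation~$\sim$ and any weight-preserving permutation~$\gamma$ both leave the quantities~$\card{A \cap W_j}$ unchanged, and that the knapsack weight~$a(A)$ depends only on these cardinalities. Concretely, for any~$A \subseteq \naturalsto{n}$ we have~$a(A) = \sum_{j=1}^{\sparsity} w_j \card{A \cap W_j}$, so if~$C \sim C'$ or~$C' = \gamma(C)$ with~$\gamma(W_j) = W_j$, then~$\card{C \cap W_j} = \card{C' \cap W_j}$ for all~$j$, hence~$a(C) = a(C')$. This single observation is the engine behind both parts.

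For part~(1): since~$a(C') = a(C) > \beta$, the set~$C'$ is a cover. For minimality, take any~$i \in C'$; I want to show~$C' \setminus \{i\}$ is not a cover, i.e.\ $a(C' \setminus \{i\}) \leq \beta$. Let~$j$ be the weight class with~$i \in W_j$, so~$a(C' \setminus \{i\}) = a(C') - w_j$. Because~$C \sim C'$ and~$\card{C' \cap W_j} = \card{C \cap W_j} \geq 1$, the class~$W_j$ contains at least one element of~$C$; pick such an~$i' \in C \cap W_j$. Then~$a(C \setminus \{i'\}) = a(C) - w_j = a(C') - w_j = a(C' \setminus \{i\})$, and minimality of~$C$ gives~$a(C \setminus \{i'\}) \leq \beta$. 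Hence~$a(C' \setminus \{i\}) \leq \beta$, so no single-element deletion from~$C'$ yields a cover, and any proper subset of~$C'$ is contained in such a deletion and therefore also not a cover. Thus~$C'$ is a minimal cover.

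For part~(2): a weight-preserving permutation~$\gamma$ is in particular a permutation of~$\naturalsto{n}$ satisfying~$\card{\gamma(C) \cap W_j} = \card{\gamma(C \cap W_j)} = \card{C \cap W_j}$ (using~$\gamma(W_j) = W_j$), so~$\gamma(C) \sim C$, and part~(1) immediately yields that~$\gamma(C)$ is a minimal cover. It remains to identify its cover inequality. By definition the cover inequality of~$\gamma(C)$ is~$\sum_{\ell \in \gamma(C)} x_\ell \leq \card{\gamma(C)} - 1$; substituting~$\ell = \gamma(i)$ as~$i$ ranges over~$C$ (a bijection~$C \to \gamma(C)$) rewrites the left-hand side as~$\sum_{i \in C} x_{\gamma(i)}$, and~$\card{\gamma(C)} = \card{C}$ since~$\gamma$ is a bijection. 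This is exactly~\eqref{eq:permuted_cover_ineq}.

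There is no real obstacle here; the only point requiring a moment's care is the reduction of "no proper subset is a cover" to "no single-element deletion is a cover" in part~(1), which holds because covers are upward closed (a superset of a cover is a cover), so a proper subset of~$C'$ that were a cover would force some~$C' \setminus \{i\}$ to be a cover as well. Everything else is bookkeeping with the identity~$a(A) = \sum_j w_j \card{A \cap W_j}$.
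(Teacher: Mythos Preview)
Your argument is correct and is exactly the elementary verification the paper has in mind; the paper does not spell out a proof of this Observation at all, treating both claims as immediate consequences of the fact that~$a(A)$ depends only on the cardinalities~$\card{A\cap W_j}$. Your handling of the one non-trivial point, reducing ``no proper subset is a cover'' to ``no single-element deletion is a cover'' via upward closure, is appropriate and makes the write-up self-contained.
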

Based on this observation, we can solve the separation problem of minimal cover inequalities for a given vector~$\hat{x}$ as follows.
We iterate over all equivalence classes~$\mathcal{C}$ of minimal covers, and we look for a minimal cover~$C^{\max} \in \mathcal{C}$ whose left-hand side is maximal w.r.t.~$\hat{x}$, i.e., $\hat{x}(C) \leq \hat{x}(C^{\max})$ for all~$C\in\mathcal{C}$.
Since the right-hand side of all minimal cover inequalities for covers in~$\mathcal{C}$ is the same, a violated inequality within class~$\mathcal{C}$ exists if and only if the inequality for~$C^{\max}$ is violated.
This idea naturally extends to the LCIs:

In this case, for a given minimal cover~$C$ and corresponding maximal independent set~$S$, an equivalence class is defined as~$\mathcal{M}(C,S)$ consisting of all pairs~$(C',S') \in \naturalsto{n} \times \naturalsto{n}$ with~$S' \cap C' = \emptyset$, $C' \sim C$, and~$S' \sim S$.
Then, there exists a violated LCI within the class~$\mathcal{M}(C,S)$ for the point~$\hat{x}$ if and only if the inequality corresponding to the following pair of cover and independent set is violated:
\begin{equation*}
    \left(C , S\right)^{\max} \define \underset{(C', S')\in \mathcal{M}(C, S)}{\argmax} \brackets{
    \sum_{i\in C'} \hat{x}_i + \sum_{i\in S'} (\pi_i + 1) \hat{x}_i + \sum_{i\notin C'\cup S'} \pi_i \hat{x}_i 
    }.
\end{equation*}
We can obtain the pair~$(C, S)^{\max}$ by independently inspecting the weight classes~$W_j$, $j \in \naturalsto{\sparsity}$, as follows:
\begin{enumerate}
  \item Set~$S\cap W_j$ to be the~$\card{S\cap W_j}$ largest values of~$\brackets{\hat{x}_l : l\in W_j}$.
  \item Depending on the value of~$\pi_j$:
  \begin{enumerate}
    \item If~$\pi_j \geq 1$, set~$C\cap W_j$ to be the indices of the~$\card{C\cap W_j}$ smallest values of~$\brackets{\hat{x}_l : l\in W_j\setminus S}$.
    \item If~$\pi_j = 0$, set~$C\cap W_j$ to be the indices of the~$\card{C\cap W_j}$ largest values of~$\brackets{\hat{x}_l : l\in W_j\setminus S}$.
  \end{enumerate}
\end{enumerate}

Observe that we can write the inequality explicitly in the special case if all weight classes are sorted.
Formally, when~$W_j$ is given by~$\brackets{i_1, \dots, i_{\card{W_j}}}$, the point~$\hat{x}$ is sorted if~$\hat{x}_{i_1}\leq \dots \leq \hat{x}_{i_{\card{W_j}}}$.
We again have to make the distinction
\begin{align*}
  \nu_j(i) &= \begin{cases}
    1         & \text{if } 1 \leq i \leq \card{W_j\cap C} \\
    \pi_j     & \text{if } \card{W_j\cap C}+1 \leq i \leq \card{W_j} - \card{W_j\cap S} \\
    \pi_j + 1 & \text{if } \card{W_j} - \card{W_j\cap S}+1 \leq i \leq {\card{W_j}}
  \end{cases}, && \text{if } \pi_j \geq 1 \\
  \nu_j(i) &= \begin{cases}
    0         & \text{if } 1 \leq i \leq \card{W_j} - \card{W_j\cap (C\cup S)} \\
    1         & \text{if } \card{W_j} - \card{W_j\cap (C\cup S)}+1 \leq i \leq \card{W_j}
  \end{cases}, && \text{if } \pi_j =0 
\end{align*}
to write a most violated cut in~$\mathcal{M}(C, S)$ as
\begin{equation}
  \sum_{j=1}^\sparsity \sum_{i=1}^{\card{W_j}} \nu_j(i)\cdot x_{j_i} \leq \card{C} -1.\label{eq:strongest-cut}
\end{equation}
Based on the representative~$(C,S)^{\max}$ of an equivalence class, we can prove Theorem~\ref{thm:complexitySparse}.
\begin{proof}[Proof of Theorem~\ref{thm:complexitySparse}]
  In a first step, we observe that there are only polynomially many equivalence classes~$\mathcal{M}(C,S)$ that we can enumerate explicitly.
  Indeed, an equivalence class~$\mathcal{C}$ is fully determined by the number of elements in each weight class~$c_j = \card{C\cap W_j}$ for any~$C\in\mathcal{C}$ and~$j\in\naturalsto{\sparsity}$.
  Since~$c_j\leq n$ for all~${j\in\naturalsto{\sparsity}}$, every minimal cover is represented by an element of~$\naturalsto{n}^{\sparsity}$.
  Such a set~$C$ corresponds to a cover if and only if~$\sum_{j=1}^\sparsity w_j c_j > \beta$.
  Similarly, the cover will be minimal if and only if we also have~$\sum_{j=1}^\sparsity w_j\cdot c_j - w_{j^*} \leq \beta$ where~$j^*\in \argmin\brackets{j\in\naturalsto{\sparsity} : c_j > 0}$.
  Consequently, we can exhaustively enumerate all families of minimal covers in~$\Oh{\sparsity n^\sparsity}$ time.
  In fact, we can lower this bound to~$\Oh{\sparsity n^{\sparsity -1}}$ because for any given~$c_1, \dots, c_{\sparsity -1}$, there exists a unique~$c_{\sparsity}$, if feasible, such that the corresponding set is a minimal cover, namely~$c_{\sparsity}= \ceil{\nicefrac{\left(\beta+1 - \sum_{j=1}^{\sparsity -1} c_j w_j\right)}{w_{\sparsity}}}$.
  Since, for each chosen cover, the value~$\mu(h + 1)$ can be computed in constant time from~$\mu(h)$, the values~$\mu(h)$, $h\leq n$, can be determined in~$\Oh{n}$ time after which we can use~$\mu$ as a constant-time lookup table.
  The families of possible sets~$S$ for a given~$C$ are also uniquely defined by the cardinality of~$S\cap W_j$.
  As such, we can again list all potential independent sets in~$\Oh{\sparsity n^\sparsity}$.
  Note in particular that evaluating a set~$Q\subseteq S$ using the formula~\eqref{eq:def_indep_set} becomes
  \begin{equation*}
    \sum_{j=1}^\sparsity q_j w_j > \mu\left(\sum_{j=1}^\sparsity q_j\cdot(\pi_j +1)\right) - \Delta(C)
  \end{equation*} 
  with denoting~$q_j = |Q\cap W_j|$ for all~$j\in\naturalsto{\sparsity}$, and can now be done in~$\Oh{\sparsity}$ via the lookup table for~$\mu(\cdot)$.
  Additionally, for~$S$ to be independent all subsets of~$S$ must also be independent.
  This can be checked dynamically in~$\Oh{n^\sparsity}$ time by enumerating all possible~$S$ increasingly with respect to~$\card{S}$ and saving the verdict for all sets in some large table.
  Then the set~$S$ is independent if it satisfies~\eqref{eq:def_indep_set} and all~$Q\subsetneq S$ where~$\card{Q}=\card{S}-1$, of which there are at most~$\sparsity$ non-equivalent, are also independent sets.

  To conclude the proof, it is sufficient to find, for each equivalence class~$\mathcal{M}(C,S)$ a maximal representative~$(C,S)^{\max}$ as defined above.
  This can be achieved by sorting the point~$\hat{x}$ to be separated on each of the weight classes, which takes~$\Oh{n\log{n}}$ time and evaluating~\eqref{eq:strongest-cut}.
  The whole separation routine can thus be implemented in~$\Oh{n\log{n} + \sparsity n^{\sparsity-1}\cdot \sparsity n^\sparsity\cdot n} = \Oh{\sparsity^2 n^{2\sparsity}}$ time.
\end{proof}

\section{Polyhedral Models for Separation Algorithms}
\label{sec:Sorting-network}

\noindent
In the previous section, we have seen that LCIs for sparse knapsacks can be separated in polynomial time.
A potential downside of this approach, however, is that implications of LCIs cannot directly be observed by an integer programming solver, but must be learned via separation.
In particular, the first LP relaxation to be solved does not contain any LCI.
It might be possible though to define a single inequality that models implications of an entire equivalence class of LCIs as shown by the following example, which is inspired by an approach of Riise et al.~\cite{riise2016recursive}. 
\begin{example}
    Let us consider the knapsack 
    \begin{equation*}
        x_1 + x_2 + x_3 + x_4 + x_5 + 2\cdot (x_6 + x_7 + x_8 + x_9 + x_{10}) \leq 10.
    \end{equation*}
    We can represent families of equivalent covers by adding the binary variables~$z_{i,j}$ that are~$1$ if and only if~$j$ elements of weight~$i$ are selected.
    All cover inequalities where the cover has three elements of weight~$1$ and four elements of weight~$2$ can then be represented by~$z_{1,3} + z_{2,4} \leq 1$.
\end{example}
The approach developed in this section is inspired by this idea, but our goal is to avoid the introduction of auxiliary \emph{integer} variables.
On a high level, for a given equivalence class of LCIs, we will introduce auxiliary \emph{continuous} variables~$y \in \R^m$ and a polyhedron~$P \subseteq \R^n \times \R^m$ such that a point~$(x,y) \in \R^n \times \R^m$ is contained in~$P$ if and only if~$x$ satisfies \emph{all} LCIs from the given equivalence class.
Our hope is that, if~$P$ can be described by few inequalities, we can add these inequalities to an integer program and avoid the separation algorithm of LCIs presented in the previous section.
We refer to the polyhedron~$P$ as a \emph{separation polyhedron}.
\begin{remark}
    For an equivalence class~$\mathcal{M}(C, S)$ of covers and corresponding independent sets, let~$\bar{P}$ be the set of all~$x\in\cube{n}$ that satisfy all equivalent LCIs to Equation~\eqref{eq:cover_full_ineq}.
    In other words,
    \begin{equation}
        \bar{P} \define \bigcap_{(C', S')\in\mathcal{M}(C, S)} \brackets{
            x\in\cube{n}: 
            \sum_{i\in S'} (\pi_i +1) x_i 
            + \sum_{i\notin C'\cup S'} \pi_i x_i
            + \sum_{i\in C'} x_i
            \leq \card{C} - 1
        }.
        \label{eq:polytope-all-cuts}
    \end{equation}
    If we do not introduce auxiliary variables, the separation polyhedron~$P$ will be given by~$\bar{P}$, and thus requires potentially exponentially many inequalities in an outer description.
    By introducing auxiliary variables though, we define a so-called extended formulation of~$\bar{P}$, which might allow to reduce the number of inequalities needed in a description drastically~\cite{conforti2010extended}.
\end{remark}

Recall that the main insight of the separation algorithm for LCIs was that we can apply the LCI that dominates its equivalence class if we sort certain variables by their value in a solution~$\hat{x} \in \R^n$.
A naive approach to achieve our goal is thus to look for a polyhedron~$P$ that models
\[
  \mathcal{X}_n \define \{(x,y) \in \R^n \times \R^n : \text{$y$ is a sorted copy of~$x$}\}
\]
and define a separation polyhedron as
\begin{equation}
  \label{eq:sortingIdea}
  \{ (x,y) \in P : \text{$y$ satisfies the LCI for~$(C,S)^{\max}$}\}
\end{equation}
for the most violated LCI w.r.t.\ a sorted vector as defined in Section~\ref{sec:proofComplexity}.
This is impossible though as the set~$\mathcal{X}_n$ is not convex in general.
\begin{lemma}
    \label{lemma:non-convex-sorted}
    For any~$n \geq 2$ the set~$\mathcal{X}_n$ is not convex.
\end{lemma}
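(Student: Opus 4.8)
The plan is to exhibit two explicit points in $\mathcal{X}_n$ whose midpoint fails to be a sorted copy of itself, which immediately contradicts convexity. The key observation is that membership in $\mathcal{X}_n$ is a conjunction of two conditions: (i) $y$ is a sorted version of $x$ (i.e. $y$ is some permutation of $x$ that is nondecreasing), and (ii) in particular $y$ itself must be nondecreasing. Condition (ii) alone is convex, but condition (i) — that the multiset of entries of $y$ equals the multiset of entries of $x$ — is the obstruction, since "having the same multiset of coordinates" is not preserved under averaging.

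First I would reduce to the case $n = 2$: if $\mathcal{X}_2$ is non-convex, then for $n \geq 2$ we can pad any witnessing pair of points with matching constant coordinates in positions $3, \dots, n$ to get a witness for $\mathcal{X}_n$ (the extra coordinates are already sorted and identical in $x$ and $y$, so they do not affect anything). Then for $n = 2$, I would take $x^{(1)} = (0, 1)$ with sorted copy $y^{(1)} = (0,1)$, so $(x^{(1)}, y^{(1)}) = (0,1,0,1) \in \mathcal{X}_2$; and $x^{(2)} = (1, 0)$ with sorted copy $y^{(2)} = (0, 1)$, so $(x^{(2)}, y^{(2)}) = (1,0,0,1) \in \mathcal{X}_2$. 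The midpoint is $(\tfrac12, \tfrac12, 0, 1)$, i.e. $x = (\tfrac12,\tfrac12)$ and $y = (0,1)$. For this to lie in $\mathcal{X}_2$ we would need $y$ to be a sorted copy of $x$, but the multiset $\{0,1\}$ is not equal to the multiset $\{\tfrac12,\tfrac12\}$, so $(x,y)$ is not in $\mathcal{X}_2$. Since $\mathcal{X}_2$ contains both endpoints but not their midpoint, it is not convex.

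There is essentially no hard part here — the only thing to be careful about is stating precisely what "$y$ is a sorted copy of $x$" means (namely: $y_1 \le \dots \le y_n$ and there is a permutation $\gamma$ with $y_i = x_{\gamma(i)}$), and checking that the two chosen endpoints genuinely satisfy it while the midpoint does not. If one wanted an alternative, the same idea works with any two permutations of a fixed two-element vector $(s,t)$ with $s \ne t$; averaging the $x$-parts destroys the property of having exactly the values $s$ and $t$ among the coordinates. I would write the $n=2$ witness in full and then add a one-line remark about padding to handle general $n$.
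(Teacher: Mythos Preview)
Your proof is correct and follows essentially the same approach as the paper: exhibit two points whose $x$-parts are permutations of one another (so the sorted $y$-parts coincide), and observe that the averaged $x$-part no longer has the same multiset of entries as the common $y$-part. The paper works directly in $\R^n$ by taking $x^1 = (1,0,\dots,0)$ and $x^2 = (0,1,0,\dots,0)$ with $y^1 = y^2 = (0,\dots,0,1)$ and an arbitrary convex combination, which amounts to your ``padding with zeros'' done in one step; your reduction to $n=2$ plus padding is the same argument, just phrased slightly differently.
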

\begin{proof}
  Let~$(x^1, y^1)$,~$(x^2, y^2) \in \mathcal{X}_n$ be such that~$x^1 = (1, 0, \ldots, 0)$ and~$x^2 = (0, 1, 0, \ldots, 0)$.
  Then, we have~$y^1 = y^2 = (0, \ldots, 0, 1)$.
  For any~$\lambda_1, \lambda_2 \in (0, 1)$ with~$\lambda_1 + \lambda_2 = 1$, we have that~$(x^3, y^3) = \lambda_1 (x^1, y^1) + \lambda_2 (x^2, y^2)$ belongs to the convex hull of~$\mathcal{X}_n$.
  However~$y^3 = (0, \ldots, 0, 1)$ is not a sorted version of~$x^3 = (\lambda_1, \lambda_2, 0, \dots, 0)$.
  Hence,~$\mathcal{X}_n$ is not convex.
\end{proof}

Nevertheless, the method we present carries the same core idea, but we need to refine the sorting mechanism.
To this end, we will make use of so-called sorting networks that we discuss in the next section.
Afterward, we will show how sorting networks can be used to define a sorting polyhedron for an equivalence class of LCIs that only requires~$\Oh{n \log{n}}$ inequalities.

\subsection{Sorting Networks}

\noindent
Despite the existence of efficient sorting algorithms, sorting networks have been introduced to offer strong alternatives in the context of systems that can process several instructions at the same time.
We provide a formal definition of sorting networks next, following the description of~\cite{cormen2001introduction}.

Sorting networks are a special case of so-called comparison networks.
Let~$n$ and~$K$ be positive integers.
A \emph{$(n,K)$-comparison network} consists of~$n$ so-called wires and~$K$ so-called comparators, which are pairs of wires~$(i_k,j_k)$, $k \in \naturalsto{K}$, such that~$i_k < j_k$.
Comparison networks can be illustrated by drawing wires as horizontal lines (labeled~$1,\dots,n$ from top to bottom) and comparators~$(i_k,j_k)$ as vertical lines connecting the two wires~$i_k$ and~$j_k$, see Figure~\ref{fig:sorting_network}.
We assume that vertical lines are sorted based on their index~$k$, i.e., if~$k,k' \in \naturalsto{K}$ satisfies~$k < k'$, then comparator~$k$ is drawn to the left of comparator~$k'$.

Given a vector~$\hat{x} \in \R^n$, a comparison network can be used to partially sort the entries of~$\hat{x}$.
To this end, we introduce a partial sorting function~$\phi_{\hat{x}}(l, k)$ for~$l \in \naturalsto{n}$ and~$k \in \naturalsto[0]{K}$ as follows:
\[
  \phi_{\hat{x}}(l, k)
  =
  \begin{cases}
    l, & \text{if } k = 0,\\
    \phi_{\hat{x}}(l, k-1), & \text{if } k \geq 1 \text{ and } \phi_{\hat{x}}(l, k-1) \notin \{i_k,j_k\},\\
    i_k, & \text{if } k \geq 1,\; \phi_{\hat{x}}(l, k-1) \in \{i_k,j_k\} \text{ and for } l' \in \naturalsto{n}\\
       & \text{such that } \{i_k,j_k\} = \{\phi_{\hat{x}}(k-1,l), \phi_{\hat{x}}(k-1,l')\}\\
       & \text{and } \hat{x}_{l'} \geq \hat{x}_{l},\\
    j_k, & \text{otherwise}.
  \end{cases}
\]
The function~$\phi_{\hat{x}}$ can be interpreted as follows.
We assign each entry~$\hat{x}_l$, $l \in \naturalsto{n}$, to the left end of wire~$l$, which is captured by~$\phi_{\hat{x}}(\cdot, 0)$.
Then, the entries travel along the wires from left to right at the same speed, where we interpret index~$k \in \naturalsto{K}$ as a time step.
When two entries reach a comparator~$(i_k,j_k)$ at time~$k$, the values assigned to wires~$i_k$ and~$j_k$ are compared.
If the value assigned to wire~$j_k$ is at most the value assigned to wire~$i_k$, the value assignment of both wires is swapped.
Otherwise, the entries travel further along their previous wires.
The value~$\phi_{\hat{x}}(l, k)$ can thus be interpreted as the position of entry~$\hat{x}_l$ in a reordered vector after~$k$ comparisons.
In particular, $\phi_{\hat{x}}(\cdot, k)$ is a permutation of~$\naturalsto{n}$.
\begin{figure}[t]
    \centering
    \begin{tikzpicture}[scale=0.45]

    \node (l1s) at (-1, 6)  {};
    \node (l1e) at (15, 6)  {};
    \node (l2s) at (-1, 4)  {};
    \node (l2e) at (15, 4)  {};
    \node (l3s) at (-1, 2)  {};
    \node (l3e) at (15, 2)  {};
    \node (l4s) at (-1, 0)  {};
    \node (l4e) at (15, 0)  {};

    \draw[->] (l1s) -- (l1e);
    \draw[->] (l2s) -- (l2e);
    \draw[->] (l3s) -- (l3e);
    \draw[->] (l4s) -- (l4e);

    \node (s11) at ($(l1s)!2/12!(l1e)$)  {};
    \node (s12) at ($(l2s)!2/12!(l2e)$)  {};
    \node (s21) at ($(l3s)!4/12!(l3e)$)  {};
    \node (s22) at ($(l4s)!4/12!(l4e)$)  {};
    \node (s31) at ($(l1s)!6/12!(l1e)$)  {};
    \node (s32) at ($(l3s)!6/12!(l3e)$)  {};
    \node (s41) at ($(l2s)!8/12!(l2e)$)  {};
    \node (s42) at ($(l4s)!8/12!(l4e)$)  {};
    \node (s51) at ($(l2s)!10/12!(l2e)$) {};
    \node (s52) at ($(l3s)!10/12!(l3e)$) {};

    \draw[ultra thick] (s11) -- (s12);
    \draw[ultra thick] (s21) -- (s22);
    \draw[ultra thick] (s31) -- (s32);
    \draw[ultra thick] (s41) -- (s42);
    \draw[ultra thick] (s51) -- (s52);

    \draw[decorate,decoration=zigzag] (l2s)
        -- (s12) 
        -- (s11) 
        -- (s31) 
        -- (s32) 
        -- (s52) 
        -- (s51) 
        -- (l2e);

    \foreach \x in {s11, s12, s21, s22, s31, s32, s41, s42, s51, s52}
        \filldraw[black] (\x) circle[radius=0.15];

    \draw ($(l1s)!1/12!(l1e)$)  node[above] {$4$};
    \draw ($(l2s)!1/12!(l2e)$)  node[above] {$2$};
    \draw ($(l3s)!1/12!(l3e)$)  node[above] {$1$};
    \draw ($(l4s)!1/12!(l4e)$)  node[above] {$3$};

    \draw ($(l1s)!3/12!(l1e)$)  node[above] {$2$};
    \draw ($(l2s)!3/12!(l2e)$)  node[above] {$4$};
    \draw ($(l3s)!3/12!(l3e)$)  node[above] {$1$};
    \draw ($(l4s)!3/12!(l4e)$)  node[above] {$3$};

    \draw ($(l1s)!5/12!(l1e)$)  node[above] {$2$};
    \draw ($(l2s)!5/12!(l2e)$)  node[above] {$4$};
    \draw ($(l3s)!5/12!(l3e)$)  node[above] {$1$};
    \draw ($(l4s)!5/12!(l4e)$)  node[above] {$3$};

    \draw ($(l1s)!7/12!(l1e)$)  node[above] {$1$};
    \draw ($(l2s)!7/12!(l2e)$)  node[above] {$4$};
    \draw ($(l3s)!7/12!(l3e)$)  node[above] {$2$};
    \draw ($(l4s)!7/12!(l4e)$)  node[above] {$3$};

    \draw ($(l1s)!9/12!(l1e)$)  node[above] {$1$};
    \draw ($(l2s)!9/12!(l2e)$)  node[above] {$3$};
    \draw ($(l3s)!9/12!(l3e)$)  node[above] {$2$};
    \draw ($(l4s)!9/12!(l4e)$)  node[above] {$4$};
    
    \draw ($(l1s)!11/12!(l1e)$)  node[above] {$1$};
    \draw ($(l2s)!11/12!(l2e)$)  node[above] {$2$};
    \draw ($(l3s)!11/12!(l3e)$)  node[above] {$3$};
    \draw ($(l4s)!11/12!(l4e)$)  node[above] {$4$};

\end{tikzpicture}
    \caption{Example of a sorting network.}
    \label{fig:sorting_network}
\end{figure}
\begin{example}
    Figure~\ref{fig:sorting_network} shows a sorting network on 4 variables.
    $G$ is composed of the five successive comparisons:~$(1, 2)$, $(3, 4)$, $(1, 3)$, $(2, 4)$, and~$(2, 3)$.
    Here the starting vector~$\hat{x}$ is~$(4, 2, 1, 3)$ and thus the output is~$(1, 2, 3, 4)$.
    The zigzagging path highlights the positions of the value~$2$.
    We then have~$\phi(2, 0)=\phi(2, 5) =2$,~$\phi(2, 1) = \phi(2, 2) = 1$ and~$\phi(2, 3) = \phi(2, 4) = 3$.
\end{example}
In the following, we denote a comparison network by~$G = \brackets{(i_k, j_k) : k \in \naturalsto{K}}$.
A comparison network is called a \emph{sorting network} if, for every~$\hat{x} \in \R^n$, the corresponding function~$\phi_{\hat{x}}(\cdot, K)$ is a permutation of~$\naturalsto{n}$ that sorts the entries of~$\hat{x}$ non-increasingly.
Small sorting networks exist for all positive integers~$n$.
The main benefit of this method is that two consecutive comparisons that are on a disjoint pair of wires can be done in parallel, in the same time step~$k$.
This allows for even more compact sorting networks where some comparisons can be done simultaneously, reducing the number of comparison steps from~$K$ to~$K'$.
\begin{proposition}[\cite{cormen2001introduction}]
    There exists sorting networks that sort a vector~$\hat{x}\in\cube{n}$ where~$K'=\Oh{\log[]{n}^2}$ using~$\Oh{n\log[]{n}}$ comparisons.
\end{proposition}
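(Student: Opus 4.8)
The plan is to exhibit a concrete family of sorting networks --- Batcher's bitonic sorting networks, as presented in~\cite{cormen2001introduction} --- and to verify both their correctness and the stated bounds on the depth~$K'$ and on the number of comparators. The construction is doubly recursive: a \emph{bitonic merger} turns a bitonic sequence (one that is, up to a cyclic shift, nondecreasing and then nonincreasing) of length~$n$ into a sorted one, and a \emph{bitonic sorter} sorts an arbitrary sequence by recursively producing two sorted halves, one ascending and one descending, whose concatenation is bitonic, and then applying the merger. So there are two things to do: prove the merger correct, and solve the resulting recurrences for size and depth. Throughout I would assume $n$ is a power of two; otherwise one pads $\hat{x}$ with at most $n$ trailing zeros up to the next power of two, which changes nothing asymptotically and, since the knapsack has no trivial variables, does not interfere with the minimal covers one ultimately separates.

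First I would reduce correctness to a finite check via the zero--one principle: a comparison network sorts \emph{every} $\hat{x}\in\R^n$ as soon as it sorts every $\hat{x}\in\B{n}$. One direction is trivial. For the converse, each comparator outputs the pair $(\max,\min)$ of its two inputs, and for every nondecreasing $f\colon\R\to\R$ one has $f(\max(a,b))=\max(f(a),f(b))$ and $f(\min(a,b))=\min(f(a),f(b))$, so applying $f$ coordinatewise commutes with the action of the entire network. Hence, if some real vector left the network with an inversion at output positions $p<q$, thresholding at a value strictly between the two offending output entries would exhibit a $0$--$1$ vector that the network also fails to sort, a contradiction; so it suffices to reason about $0$--$1$ sequences.

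Next I would build the bitonic merger and prove it correct on $0$--$1$ inputs. On $n$ wires it consists of a \emph{half-cleaner} --- the $n/2$ comparators $(\ell,\ell+n/2)$ for $\ell\in\naturalsto{n/2}$, all in a single parallel layer --- followed by two independent copies of the merger on the top and on the bottom half. The crux is a purely combinatorial lemma: if the input is a bitonic $0$--$1$ sequence, then after the half-cleaner each half is again bitonic, every entry of the top half is at most every entry of the bottom half, and at least one of the two halves is constant. This I would prove by a short case analysis according to where the block of ones of the bitonic sequence lies relative to the midpoint; it is the step I expect to require the most care. Unwinding the recursion then shows that the merger sorts any bitonic input, and its size and depth obey $M(n)=n/2+2M(n/2)=\Oh{n\log[]{n}}$ and $D_M(n)=1+D_M(n/2)=\Oh{\log[]{n}}$.

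Finally I would assemble the sorter and read off the bounds. Sorting the first half in ascending and the second half in descending order via two recursive calls makes the concatenation bitonic, and one merger then finishes the job; correctness again follows from the zero--one principle together with the half-cleaner lemma. The number of comparators satisfies $S(n)=2S(n/2)+M(n)=2S(n/2)+\Oh{n\log[]{n}}$, which solves to $\Oh{n\log[]{n}^2}$, and the depth satisfies $D_S(n)=D_S(n/2)+D_M(n)=D_S(n/2)+\Oh{\log[]{n}}$, which solves to $\Oh{\log[]{n}^2}$, so $K'=\Oh{\log[]{n}^2}$. Restricting the input to $\cube{n}$ affects none of this. If the sharper $\Oh{n\log[]{n}}$ count on the number of comparators is wanted in the strict sense, one instead invokes the AKS sorting network, which has $\Oh{n\log[]{n}}$ comparators and depth $\Oh{\log[]{n}}\subseteq\Oh{\log[]{n}^2}$; for the purposes of this paper the bitonic network already suffices.
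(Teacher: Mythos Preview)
The paper does not prove this proposition at all; it is stated as a citation from~\cite{cormen2001introduction}, and the surrounding text only adds that a recursive construction can be found there. Your proposal therefore goes well beyond what the paper does: you give an explicit construction (Batcher's bitonic sorter), sketch the zero--one principle to reduce correctness to binary inputs, isolate the half-cleaner lemma as the combinatorial core, and solve the resulting recurrences. All of this is correct and is precisely the argument one finds in Cormen et al.

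One point worth flagging: the proposition as stated in the paper claims~$\Oh{n\log{n}}$ comparators together with depth~$\Oh{\log{n}^2}$, but Batcher's construction yields~$\Oh{n\log{n}^2}$ comparators, not~$\Oh{n\log{n}}$, as you yourself compute. You handle this honestly by pointing to the AKS network for the sharper comparator bound, which is the right move; strictly speaking the combination of bounds in the proposition is only met by AKS (whose~$\Oh{\log{n}}$ depth is of course contained in~$\Oh{\log{n}^2}$). For the later use in the paper---building the extended formulation~$P$ whose number of constraints scales with the number of comparators---this distinction matters, and the paper itself seems to conflate the two bounds when it later writes that~$P$ has~$\Oh{n\log{n}}$ constraints. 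Your closing remark that ``for the purposes of this paper the bitonic network already suffices'' is therefore slightly generous to the paper, but your mathematics is sound.
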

A recursive description for building such a sorting network in~$\Oh{n\log[]{n}}$ time is given in~\cite{cormen2001introduction}.
Moreover, \cite{cormen2001introduction} gives a list of some sorting networks that are straightforward to build, such as the one imitating insertion sort.
However, for the remainder of the chapter, we will only describe techniques and polytopes based on sorting networks with only one comparison per step.
This is because the adaptation of the proofs and constructions for the parallelized version are rather intuitive but heavy on notation.

\subsection{The Sorting Network Polytope}
\label{sec:sorting-network-polytope}

\noindent
Equipped with the concept of sorting networks, we will now derive a sorting polyhedron for fixed vectors~$\hat{x} \in [0,1]^n$, which is based on the idea presented in~\eqref{eq:sortingIdea}.
Later, we will discuss how the assumption that~$\hat{x}$ is fixed can be dropped to make use of it in modeling the separation problem of LCIs.
The construction of the sorting polyhedron is based on~\cite{goemans2015permutahedron}.

Let~$G = \brackets{(i_k, j_k) : k \in \naturalsto{K}}$ be a sorting network for~$n$-dimensional vectors.
We introduce auxiliary variables~$x^k\in\cube{n}$, $k \in \naturalsto[0]{K}$, which shall correspond to the partially sorted vectors after~$k$ steps.
The comparisons~$(i_k,j_k)$ then induce the following constraints:
\begin{subequations}
    \label{eq:explicit-sorting-network}
    \begin{alignat}{7}
        x^{k-1}_{i_k} && && -x^k_{i_k} &&   && \geq & \ 0, && && k\in\naturalsto{K}, \label{eq:alpha-eq-1} \\
        && x^{k-1}_{j_k} && -x^k_{i_k} &&   && \geq & \ 0, && && k\in\naturalsto{K}, \label{eq:alpha-swap-1} \\
        -x^{k-1}_{i_k} && && && +x^k_{j_k}  && \geq & \ 0, && && k\in\naturalsto{K}, \label{eq:alpha-swap-2} \\   
        && -x^{k-1}_{j_k} && && +x^k_{j_k}  && \geq & \ 0, && && k\in\naturalsto{K}, \label{eq:alpha-eq-2} \\
        -x^{k-1}_{i_k} && -x^{k-1}_{j_k} && +x^k_{i_k} && +x^k_{j_k} && = & \ 0, && && k\in\naturalsto{K}, \label{eq:beta-k} \\
        && -x^{k-1}_l && && +x^k_l  && = & \ 0 , \quad && l \in \naturalsto{n}\setminus\brackets{i, j}, \quad && k\in\naturalsto{K},  \label{eq:delta-k-l}\\
        && && && -x^k_l && \geq & -1 , \quad && l \in \naturalsto{n}, && k\in\naturalsto{K}, \label{eq:lambda} \\
        && && && x^k_l && \geq & 0 , \quad && l \in \naturalsto{n}, && k\in\naturalsto{K}, \label{eq:nonnegative} \\
        && && && x^0_l && = & \ \hat{x}_l, \quad && l \in \naturalsto{n}. && \label{eq:delta-0-l}
    \end{alignat}
\end{subequations}
We refer to the polytope defined by these constraints as~$P(G, \hat{x})$.
Observe that Constraints~\eqref{eq:delta-k-l} and~\eqref{eq:delta-0-l} force all entries that are not compared at step~$k$ to be copied over in the next step.
Constraints~\eqref{eq:alpha-eq-1} and~\eqref{eq:alpha-swap-1} ensure that that~$x^k_{i_k} \leq \min\brackets{x^{k-1}_{i_k}, x^{k-1}_{j_k}}$ and, in a similar way, Constraints~\eqref{eq:alpha-eq-2} and~\eqref{eq:alpha-swap-2} ensure that~$x^k_{j_k} \geq\max\brackets{x^{k-1}_{i_k}, x^{k-1}_{j_k}}$.
Together with Constraint~\eqref{eq:beta-k}, the first five constraints of System~\eqref{eq:explicit-sorting-network} are equivalent to
\begin{align*}
  x^k_{i_k} &= \min\brackets{x^{k-1}_{i_k}, x^{k-1}_{j_k}} - \epsilon \\
  x^k_{j_k} &= \max\brackets{x^{k-1}_{i_k}, x^{k-1}_{j_k}} + \epsilon
\end{align*}
for some~$\epsilon \geq 0$.
That is, solutions adhering to these inequalities do not necessarily correspond to reorderings of the initial vector~$\hat{x}$.
In practice, however, it is enough for the sorted copy of~$x^{k-1}_{i_k}, x^{k-1}_{j_k}$ to be part of the feasible values of~$x^k_{i_k}, x^k_{j_k}$.
\begin{lemma}
    \label{lemma:sorting-network-works}
    Let~$G$ be a sorting network, let~$\hat{x}\in\cube{n}$, and~$P(G, \hat{x})$ the sorting network polytope as in~\eqref{eq:explicit-sorting-network}.
    Then there exists a feasible point~$(\tilde{x}^0, \ldots, \tilde{x}^K)\in P(G, x)$ such that~$\hat{x}_l = \tilde{x}^k_{\phi(l, k)}$ for all~$l\in\naturalsto{n}$, $k\in\naturalsto[0]{K}$.
\end{lemma}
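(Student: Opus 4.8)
The plan is to show that the \emph{honest} trajectory of the sorting network run on~$\hat{x}$ --- the one in which every comparator performs an exact swap with no slack --- is a feasible point of~$P(G,\hat{x})$ with the claimed property. Concretely, for each~$k \in \naturalsto[0]{K}$ and each position~$p \in \naturalsto{n}$ I would set~$\tilde{x}^k_p \define \hat{x}_l$, where~$l$ is the unique index with~$\phi_{\hat{x}}(l,k) = p$; this is well defined because~$\phi_{\hat{x}}(\cdot,k)$ is a permutation of~$\naturalsto{n}$, and by construction~$\hat{x}_l = \tilde{x}^k_{\phi_{\hat{x}}(l,k)}$ for all~$l$ and~$k$, which is exactly the identity claimed in the lemma. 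What then remains is to verify that~$(\tilde{x}^0,\dots,\tilde{x}^K)$ satisfies every constraint of System~\eqref{eq:explicit-sorting-network}. Three families are immediate: every coordinate of every~$\tilde{x}^k$ equals some entry of~$\hat{x} \in \cube{n}$, so~\eqref{eq:lambda} and~\eqref{eq:nonnegative} hold; and~$\phi_{\hat{x}}(\cdot,0)$ is the identity permutation, so~$\tilde{x}^0 = \hat{x}$, giving~\eqref{eq:delta-0-l}.

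For the comparator constraints, I would fix~$k \in \naturalsto{K}$ with comparator~$(i_k,j_k)$ and unwind the recursive definition of~$\phi_{\hat{x}}(\cdot,k)$ from~$\phi_{\hat{x}}(\cdot,k-1)$. If~$l$ satisfies~$\phi_{\hat{x}}(l,k-1) = p \notin \brackets{i_k,j_k}$, then~$\phi_{\hat{x}}(l,k) = p$ and hence~$\tilde{x}^k_p = \hat{x}_l = \tilde{x}^{k-1}_p$, which is~\eqref{eq:delta-k-l}. Otherwise, let~$l,l'$ be the two indices with~$\brackets{\phi_{\hat{x}}(l,k-1),\phi_{\hat{x}}(l',k-1)} = \brackets{i_k,j_k}$, labelled so that~$\phi_{\hat{x}}(l,k) = i_k$; the definition of~$\phi_{\hat{x}}$ then forces~$\hat{x}_l \leq \hat{x}_{l'}$ and~$\phi_{\hat{x}}(l',k) = j_k$. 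Using the identity at step~$k-1$ we have~$\brackets{\hat{x}_l,\hat{x}_{l'}} = \brackets{\tilde{x}^{k-1}_{i_k},\tilde{x}^{k-1}_{j_k}}$, so~$\tilde{x}^k_{i_k} = \hat{x}_l = \min\brackets{\tilde{x}^{k-1}_{i_k},\tilde{x}^{k-1}_{j_k}}$ and~$\tilde{x}^k_{j_k} = \hat{x}_{l'} = \max\brackets{\tilde{x}^{k-1}_{i_k},\tilde{x}^{k-1}_{j_k}}$. The first equality gives~\eqref{eq:alpha-eq-1} and~\eqref{eq:alpha-swap-1} (a minimum is at most each of its arguments), the second gives~\eqref{eq:alpha-swap-2} and~\eqref{eq:alpha-eq-2}, and~$\min\brackets{a,b}+\max\brackets{a,b} = a+b$ gives~\eqref{eq:beta-k}. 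This exhausts all constraints of~\eqref{eq:explicit-sorting-network} and proves the lemma.

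I do not expect a genuine obstacle here: the statement is essentially a bookkeeping verification that~$P(G,\hat{x})$ is large enough to contain the true sorting run. The only points requiring care are, first, correctly matching the somewhat intricate recursive definition of~$\phi_{\hat{x}}$ to the comparator constraints --- in particular checking that the smaller of the two compared values is the one routed to wire~$i_k$ --- and, second, the implicit tie-breaking in that definition when~$\hat{x}$ has repeated entries, which must be fixed (e.g.\ by index) so that~$\phi_{\hat{x}}(\cdot,k)$ is indeed a permutation; note that the values~$\min$ and~$\max$ used above, and hence the whole verification, are unaffected by the choice of tie-break.
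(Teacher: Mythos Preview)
Your proposal is correct and follows the same approach as the paper: define~$\tilde{x}^k$ as the honest state of the sorting network after~$k$ steps and verify that this trajectory satisfies every constraint block of System~\eqref{eq:explicit-sorting-network}. The paper only sketches this as a ``standard inductive argument'' exploiting that the sorted pair~$\tilde{x}^k_{i_k},\tilde{x}^k_{j_k}$ satisfies~\eqref{eq:alpha-eq-1}--\eqref{eq:beta-k}, whereas you spell out the verification of each constraint family and the tie-breaking issue explicitly.
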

\begin{proof}
    We observe that System~\eqref{eq:explicit-sorting-network} has a block structure that is induced by the indices~$k \in \naturalsto{K}$ and two blocks overlap if they have consecutive indices.
    The assertion then follows by a standard inductive argument that exploits that the pair~$\tilde{x}^k_{i_k}, \tilde{x}^k_{j_k}$ satisfies~\eqref{eq:alpha-eq-1} to~\eqref{eq:beta-k}.
\end{proof}

Next, we discuss how System~\eqref{eq:explicit-sorting-network} can be used to replace the exponential amount of inequalities defining~$\bar{P}$ in~\eqref{eq:polytope-all-cuts}.
Recall that our goal is to determine if a point~$\hat{x}$ lies in~$\bar{P}$ or not.
To that end, we have seen in Section~\ref{sec:proofComplexity} that~$\hat{x}\in\bar{P}$ is equivalent to~$\hat{x}$ satisfying Inequality~\eqref{eq:strongest-cut} which requires a permutation sorting the values of~$\hat{x}$ within each weight class.
We emulate this sorting of variables through sorting networks.
Let $G_1, \dots, G_\sparsity$ be sorting networks for the weight classes~$W_1, \dots, W_\sparsity$.
By extending with trivial layers if needed, we can assume that they all use~$K$ steps.
Let~$P_j(G_j, \hat{x})$ be the corresponding comparison polytope for each~$j\in\naturalsto{\sparsity}$ and denote
\begin{equation}
    P\define\brackets{
        (x^0, \dots, x^K)\in \bigotimes_{k=0}^K\cube{n} : 
        (x^0, \dots, x^K)\in \bigcap_{j=1}^\sparsity P_j(G_j, \hat{x})
    }.
    \label{eq:all-sorted-networks}
\end{equation}
In the following, we show that using the polyhedron~$P$ as defined in~\eqref{eq:all-sorted-networks} combined with the idea of~\eqref{eq:sortingIdea} indeed yields an extended formulation of~$\bar{P}$.
The main ingredient of the proof will be the insight that, for a given vector~$\hat{x}$, the left-hand side value of the LCI for~$(C,S)^{\max}$ is the same as the minimal value of the left-hand side that is achievable over~$P$ w.r.t.\ component~$x^K$.
That is, because the sorted version of~$\hat{x}$ is contained in the~$K$-th component of~$P$, $\hat{x}$ violates an LCI if and only if~$x^K$ violates the LCI for~\eqref{eq:strongest-cut}.
Since the different weight classes of the knapsack inequality can be sorted independently, it is sufficient to prove the statement for the different polyhedra~$P_j$ independently.

\begin{proposition}
    \label{thm:sorting-network-dual-primal}
    Let~$G$ be a sorting network on~$n$ variables in~$K$ steps. 
    Let~$\hat{x}\in \cube{n}$ be a fixed input and~$0\leq\myw_1 \leq \ldots \leq \myw_n$ ordered general coefficients.
    Let~$P(G, \hat{x})$ be as in~\eqref{eq:explicit-sorting-network}.
    Let~$\phi(l, k)$ denote the position of the value~$\hat{x}_l$ in~$G$ at step~$k$.
    Then the point~$\left(\tilde{x}^0, \ldots, \tilde{x}^K\right)$
    where~$\tilde{x}^k_{\phi(l , k)} = \hat{x}_l$ is an optimal solution to
    \[
      \min\brackets{\sum_{l=1}^n \myw_lx^K_l : x \in P(G,\hat{x})}.
    \]
\end{proposition}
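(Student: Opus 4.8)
The plan is to show that the specific point $(\tilde x^0,\dots,\tilde x^K)$ obtained by routing the entries of $\hat x$ through the sorting network is both feasible for $P(G,\hat x)$ and achieves the minimum possible value of $\sum_l v_l x^K_l$. Feasibility is already provided by Lemma~\ref{lemma:sorting-network-works}, so the real content is the optimality claim. I would argue optimality in two complementary ways and pick whichever is cleanest to write; below I sketch the LP-duality route, which I expect to generalize most smoothly to the parallel case later.

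First I would observe that since $v$ is non-increasing in the sense $0\le v_1\le\dots\le v_n$ and $x^K$ must be a ``sorted'' (non-increasing) copy of the values of $\hat x$ up to the slack $\epsilon\ge 0$ described after System~\eqref{eq:explicit-sorting-network}, pairing the largest coefficients with the smallest entries is optimal — this is the discrete rearrangement inequality. Concretely, I would first handle the relaxation where $x^K$ is genuinely a permutation of $\hat x$ (the $\epsilon=0$ case), where the rearrangement inequality gives directly that the sorted copy minimizes $\sum_l v_l x^K_l$. Then I would rule out that the extra slack helps: Constraints~\eqref{eq:alpha-eq-1}, \eqref{eq:alpha-swap-1} together with \eqref{eq:beta-k} force $x^k_{i_k}+x^k_{j_k}=x^{k-1}_{i_k}+x^{k-1}_{j_k}$ and $x^k_{i_k}\le\min$, so in fact at every comparator the sum is preserved, hence $\sum_{l} x^K_l=\sum_l\hat x_l$ for any feasible point; moreover an inductive argument on $k$ shows the multiset of ``prefix sums'' $\sum_{l\le t}$ of the sorted-decreasing rearrangement is dominated (majorized) by that of any feasible $x^k$. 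Majorization of $x^K$ (in decreasing order) over the sorted copy of $\hat x$, combined with $v$ being sorted the opposite way, then yields $\sum_l v_l \tilde x^K_l \le \sum_l v_l x^K_l$ by Abel summation.

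Alternatively, and perhaps more in the spirit of ``polyhedral model'', I would exhibit an explicit dual-feasible solution certifying optimality: assign multipliers to the layer-$k$ constraints \eqref{eq:alpha-eq-1}--\eqref{eq:delta-k-l} that, telescoped backwards from layer $K$ to layer $0$, reproduce the objective vector $v$ on $x^K$ and produce the correct reduced costs; complementary slackness with $(\tilde x^0,\dots,\tilde x^K)$ holds because at each comparator exactly the tight inequality among \eqref{eq:alpha-eq-1}/\eqref{eq:alpha-swap-1} (resp.\ \eqref{eq:alpha-eq-2}/\eqref{eq:alpha-swap-2}) corresponds to which of $\tilde x^{k-1}_{i_k},\tilde x^{k-1}_{j_k}$ was the smaller. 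Propagating the weights backwards through the comparators is exactly ``running the sorting network on the coefficient vector in reverse'', which is well-defined precisely because $G$ sorts every input.

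The main obstacle I anticipate is the slack variable $\epsilon$: one must argue that although individual feasible points need \emph{not} be permutations of $\hat x$, no such point can beat the sorted copy on the objective. The clean way is the majorization argument above — showing that passing a pair through a comparator (even with slack) can only make the decreasing-order partial sums of $x^k$ larger, never smaller, and that $\tilde x^K$ realizes the componentwise-minimal decreasing rearrangement — after which Hardy–Littlewood–Pólya gives the inequality for the objective. Everything else (feasibility, block structure, the final assembly) is routine and I would invoke Lemma~\ref{lemma:sorting-network-works} for it.
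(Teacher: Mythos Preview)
Your second route---building an explicit dual certificate by propagating the coefficients $v_l$ backward through the comparators and checking complementary slackness against the swap/no-swap pattern of $\tilde x$---is precisely what the paper does; it writes the dual of~\eqref{eq:explicit-sorting-network} out in full, assigns multipliers so that the ``effective weight'' on wire $l$ at every layer equals $v_{\phi(l,K)}$, and verifies feasibility case by case. So that half of your plan matches the paper.

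Your first route has a genuine gap. The inductive step you describe establishes only that the \emph{multiset} of values of $x^k$ majorizes that of $\hat x$: each comparator-with-slack replaces a pair $(a,b)$ by $(\min(a,b)-\epsilon,\max(a,b)+\epsilon)$, which indeed spreads the multiset out. But the objective $\sum_l v_l x^K_l$ is \emph{positional}, and nothing in your argument forces $x^K$ itself to be sorted---with positive slack at intermediate comparators it generally is not. Multiset majorization together with Abel summation lets you compare $\sum_l v_l (x^K)^{\uparrow}_l$ with $\sum_l v_l \tilde x^K_l$; unfortunately the rearrangement inequality then goes the wrong way, since for increasing $v$ one has $\sum_l v_l (x^K)^{\uparrow}_l \ge \sum_l v_l x^K_l$, which gives no lower bound on the positional sum. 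What you would actually need for the Abel-summation step is the \emph{positional} suffix-sum inequality $\sum_{l\ge t} x^K_l \ge \sum_{l\ge t}\tilde x^K_l$ for every $t$; proving this uses the full strength of ``$G$ is a sorting network'' in a way your multiset argument does not, and the naive induction on $k$ breaks down exactly when the $\tilde x$-comparison at step $k$ is a swap. A minor additional slip: in the $\epsilon=0$ case the feasible $x^K$ is a \emph{single} point (the sorted copy), not a family of permutations, so the rearrangement inequality does no work there.
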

\begin{proof}

    Using Lemma~\ref{lemma:sorting-network-works}, we know that the point~$\left(\tilde{x}^0, \ldots, \tilde{x}^K\right)$ is a feasible solution to this linear program with objective value~$\sum_{l=1}^{n} \myw_l \hat{x}_{\phi(l, K)}$.
    To prove that it is optimal, we will construct a dual solution with the same objective value~$\sum_{l=1}^{n} \myw_{\psi(l)} \hat{x}_l$, where~$\psi = \phi^{-1}(\cdot, K)$.


    For all~$l\in \naturalsto{n}$ and for all~$ k \in \naturalsto[0]{K-1}$, the variable~$x^k_l$ appears in constraints of~\eqref{eq:explicit-sorting-network} either when~$x^k$ is the output of step~$k$ or the input of step~$k+1$, as well as in~\eqref{eq:lambda} and~\eqref{eq:nonnegative}.
    The type of constraints in which~$x^k_l$ appears depend on the three cases~$l=i_k$,~$l=j_k$ or~$l\notin\brackets{i_k, j_k}$ and the same three cases for the input at~$k+1$, resulting in nine possible dual constraints explicitly written in~\eqref{eq:all-dual-constraints}. 
    We use the shorthand \Up\ if~$l$ is the upper wire of the comparison, \Down\ if it is the lower one and \Neutral\ when~$l$ is not in the current comparison.
    This allows us to write all combinations in the \texttt{AB} format where \texttt{A} and \texttt{B} are~$l$'s position at step~$k$ and~$k+1$, respectively.
    Observe that when~$k=K$ there is no step~$K+1$ to be the input of for~$x^K$ so constraints corresponding to that layer have no \texttt{B} part.

    For each comparison~$(k, \brackets{i, j})\in G$ we get a single dual variable~$\beta^k$ from~\eqref{eq:beta-k},~$n-2$ variables with~$\delta^k_l$ for all~$l\notin \brackets{i, j}$ from~\eqref{eq:delta-k-l} and four non-negative variables~$\myalpha{=}{1}{k}$, $\myalpha{\times}{1}{k}$, $\myalpha{\times}{2}{k}$ and~$\myalpha{=}{2}{k}$ from~\eqref{eq:alpha-eq-1} to~\eqref{eq:alpha-eq-2} respectively.
    Note that although there are no comparisons at the zero-th layer, Equation~\eqref{eq:delta-0-l} behaves similarly to~\eqref{eq:delta-k-l} and as such we use the~$n$ variables~$\delta^0_l$ to represent them.
    Finally, each~$(l, k) \in\naturalsto{n} \times\naturalsto[0]{K}$ induces the non-negative variables~$\lambda^k_l$ and~$\theta^k_l$ from~\eqref{eq:lambda} and~\eqref{eq:nonnegative}, respectively.
    %
    %
    %
    The~$\alpha$ variables are grouped in two pairs~$\alpha^=$ and~$\alpha^\times$ because they correspond to either preserving~$\tilde{x}^k_{i_k}, \tilde{x}^{k}_{j_k}$ on their wires or swapping them.
    On the one hand, if~$\tilde{x}^k_{i_k} = \tilde{x}^{k+1}_{i_k}$, then necessarily~\eqref{eq:alpha-eq-1} and~\eqref{eq:alpha-eq-2} must be tight, which is represented by their values continuing horizontally~$(=)$ in~$G$.
    On the other hand, if~$\tilde{x}^k_{i_k} = \tilde{x}^{k+1}_{j_k}$, then it is~\eqref{eq:alpha-swap-1} and~\eqref{eq:alpha-swap-2} that must be tight, represented by the values exchanging positions~$(\times)$ in~$G$.
    Note that in the case where~$\tilde{x}^k_{i_k} = \tilde{x}^{k}_{j_k}$ we arbitrarily choose to treat it as the case~$\alpha^=$ even though~$\alpha^\times$ could also be active.
    \begin{subequations}
      \label{eq:all-dual-constraints}
      \begin{footnotesize}
        \begin{alignat}{5}
            \delta^{k}_l 
            & -\delta^{k+1}_l 
            && +\theta^k_l-\lambda^k_l
            && \leq 0, \
            && l\in\naturalsto{n}, k<K,
            \label{eq:dual-NN} \tag{\theequation-\Neutral\Neutral} \\
            \delta^{k}_l 
            & -\left(\beta^{k+1} - \myalpha{=}{1}{k+1} + \myalpha{\times}{2}{k+1}\right)
            && +\theta^k_l-\lambda^k_l
            && \leq 0 , \
            && l\in\naturalsto{n}, k<K,
            \label{eq:dual-NU} \tag{\theequation-\Neutral\Up} \\
            \delta^{k}_l 
            & -\left(\beta^{k+1} - \myalpha{\times}{1}{k+1} + \myalpha{=}{2}{k+1}\right)
            && +\theta^k_l-\lambda^k_l
            && \leq 0 , \
            && l\in\naturalsto{n}, k<K,
            \label{eq:dual-ND} \tag{\theequation-\Neutral\Down} \\
            \beta^k - \myalpha{=}{1}{k} - \myalpha{\times}{1}{k}
            & -\delta^{k+1}_l 
            && +\theta^k_l-\lambda^k_l
            && \leq 0 , \
            && l\in\naturalsto{n}, k<K,
            \label{eq:dual-UN} \tag{\theequation-\Up\Neutral} \\
            \beta^k - \myalpha{=}{1}{k} - \myalpha{\times}{1}{k}
            & -\left(\beta^{k+1} - \myalpha{=}{1}{k+1} + \myalpha{\times}{2}{k+1}\right)
            && +\theta^k_l-\lambda^k_l
            && \leq 0 , \
            && l\in\naturalsto{n}, k<K,
            \label{eq:dual-UU} \tag{\theequation-\Up\Up} \\
            \beta^k - \myalpha{=}{1}{k} - \myalpha{\times}{1}{k}
            & -\left(\beta^{k+1} - \myalpha{\times}{1}{k+1} + \myalpha{=}{2}{k+1}\right)
            && +\theta^k_l-\lambda^k_l
            && \leq 0 , \
            && l\in\naturalsto{n}, k<K,
            \label{eq:dual-UD} \tag{\theequation-\Up\Down} \\
            \beta^k + \myalpha{\times}{2}{k} + \myalpha{=}{2}{k}
            & -\delta^{k+1}_l 
            && +\theta^k_l-\lambda^k_l
            && \leq 0 , \
            && l\in\naturalsto{n}, k<K,
            \label{eq:dual-DN} \tag{\theequation-\Down\Neutral} \\
            \beta^k + \myalpha{\times}{2}{k} + \myalpha{=}{2}{k}
            & -\left(\beta^{k+1} - \myalpha{=}{1}{k+1} + \myalpha{\times}{2}{k+1}\right)
            && +\theta^k_l-\lambda^k_l
            && \leq 0 , \
            && l\in\naturalsto{n}, k<K,
            \label{eq:dual-DU} \tag{\theequation-\Down\Up} \\
            \beta^k + \myalpha{\times}{2}{k} + \myalpha{=}{2}{k}
            & -\left(\beta^{k+1} - \myalpha{\times}{1}{k+1} + \myalpha{=}{2}{k+1}\right)
            && +\theta^k_l-\lambda^k_l
            && \leq 0 , \
            && l\in\naturalsto{n}, k<K,
            \label{eq:dual-DD} \tag{\theequation-\Down\Down} \\
            \delta^{K}_l
            &
            && +\theta^K_l -\lambda^K_l 
            && \leq \myw_l, \
            && l\in\naturalsto{n},
            \label{eq:dual-NK} \tag{\theequation-\Neutral -} \\
            \beta^K - \myalpha{=}{1}{K} - \myalpha{\times}{1}{K}
            &
            && +\theta^K_l -\lambda^K_l
            && \leq \myw_l , \
            && l\in\naturalsto{n},
            \label{eq:dual-UK} \tag{\theequation-\Up -} \\
            \beta^K + \myalpha{\times}{2}{K} + \myalpha{=}{2}{K}
            &
            && +\theta^K_l -\lambda^K_l
            && \leq \myw_l , \
            && l\in\naturalsto{n}.
            \label{eq:dual-DK} \tag{\theequation-\Down -}
        \end{alignat}
      \end{footnotesize}
    \end{subequations}

    The dual objective function is~$\sum_{l=1}^n \hat{x}_l \cdot \delta^0_l -\sum_{l=1}^n \sum_{k=0}^K \lambda^k_l$.
    Since~$\tilde{x}^K_l = \hat{x}_{\psi(l)}$, we want to set all~$\delta^0_{l} = \myw_{\psi(l)}$ as well as~$\lambda^k_l = 0$.
    Observe that the dual objective function reduces then to
    \begin{equation}
        \sum_{l=1}^n \hat{x}_l \cdot \myw_{\psi(l)} = 
        \sum_{l=1}^n \hat{x}_{\phi(l, K)} \cdot \myw_l = 
        \sum_{l=1}^n \tilde{x}^K_l \cdot \myw_l. 
    \end{equation}
    More generally, we can safely choose to set all~$\theta^k_l = 0$ for any pair~$l\in\naturalsto{n}, k\in\naturalsto[0]{K}$ since they have no contribution in the objective function and only make the constraint tighter if not set to zero.
    For every~$1\leq k\leq K$ and every~$l\in \naturalsto{n}$, assuming the comparison at step~$k$ is~$(k, \brackets{i_k, j_k })$, we can construct the dual variables ($\delta^k_l$ or~$\beta^k$ and the~$\myalpha{}{}{k}$) by observing whether~$\phi(l, k) \in \brackets{i_k, j_k}$ as well as whether~$\phi(l, k) = \phi(l, k-1)$ or not.
    \begin{enumerate}[label=(\alph*)]
        \item\label{case:(-)} If~$\phi(l, k) \notin \brackets{i_k, j_k}$, then~$\phi(l, k) = \phi(l, k-1)$ and we are in the \texttt{N} situation and we set~$\delta^k_{\phi(l, k)} = \myw_{\phi(l, K)}$.
        
        \item\label{case:(=)} If~$\phi(l, k) \in \brackets{i_k,j_k}$ and~$\phi(l, k) = \phi(l, k-1)$, we can set~$\myalpha{\times}{1}{k} = \myalpha{\times}{2}{k} = 0$.
        Let~$l'$ be the other wire such that~$\brackets{\phi(l, k), \phi(l', k)} = \brackets{i_k, j_k}$.
        Then~$\phi(l', k) = \phi(l', k-1)$ as well.
        If we have~$\phi(l, k) <\phi(l', k)$, then it follows that~$\tilde{x}^k_{\phi(l, k)} \leq \tilde{x}^k_{\phi(l', k)}$ and, since G is a sorting network,~$\phi(l, K) < \phi(l', K)$ in the end.
        Therefore~$\myw_{\phi(l, K)} \leq \myw_{\phi(l', K)}$.
        We can then choose~$\beta^k = \nicefrac{\left(\myw_{\phi(l', K)} + \myw_{\phi(l, K)}\right)}{2}$.
        This allows for~$\myalpha{=}{1}{k} = \beta^k - \myw_{\phi(l, K)}\geq 0$ and~$\myalpha{=}{2}{k} = \myw_{\phi(l', K)} - \beta^k\geq 0$.
        If we have~$\phi(l, k) > \phi(l', k)$, by a symmetric argument we need to change~$\myalpha{=}{1}{k} = \beta^k - \myw_{\phi(l', K)}\geq 0$ and~$\myalpha{=}{2}{k} = \myw_{\phi(l, K)} - \beta^k\geq 0$.

        \item\label{case:(x)} If~$\phi(l, k) \in \brackets{i_k,j_k}$ and~$\phi(l, k) \neq \phi(l, k-1)$, then we can set~$\myalpha{=}{1}{k} = \myalpha{=}{2}{k} = 0$.
        Let~$l'$ be the other wire such that~$\brackets{\phi(l, k), \phi(l', k)} = \brackets{i_k, j_k}$.
        By the same argument, we can set~$\beta^k = \nicefrac{\left(\myw_{\phi(l', K)} + \myw_{\phi(l, K)}\right)}{2}$, and either~$\myalpha{\times}{1}{k} = \beta^k - \myw_{\phi(l, K)}$ and~$\myalpha{\times}{2}{k} = \myw_{\phi(l')} - \beta^k$ if~$\phi(l, k) < \phi(l', k)$ or~$\myalpha{\times}{1}{k} = \beta^k - \myw_{\phi(l', K)}$ and~$\myalpha{\times}{2}{k} = \myw_{\phi(l, K)} - \beta^k$ otherwise.
    \end{enumerate} 

    This construction means that for any~$l$ and~$k$, we have, depending on the constraint corresponding to~$x^k_{\phi(l, k)}$, either~$\delta^k_{\phi(l, k)} = \myw_{\phi(l, K)}$,~$\beta^k - \myalpha{=}{1}{k} - \myalpha{\times}{1}{k} = \myw_{\phi(l, K)}$ or~$\beta^k + \myalpha{\times}{2}{k} + \myalpha{=}{2}{k} = \myw_{\phi(l, K)}$.
    Plugging those values into the System~\eqref{eq:all-dual-constraints} immediately satisfies constraints~\eqref{eq:dual-NK},~\eqref{eq:dual-UK} and~\eqref{eq:dual-DK}.
    The remaining constraints reduce to only three different cases:
    \begin{alignat}{3}
        \myw_{\phi(l, K)} & -\delta^{k+1}_{\phi(l, k+1)} && \leq 0,
        \label{eq:propagate-N}\\
        \myw_{\phi(l, K)} & -\left(\beta^{k+1} - \myalpha{=}{1}{k+1} + \myalpha{\times}{2}{k+1}\right) && \leq 0,
        \label{eq:propagate-U}\\
        \myw_{\phi(l, K)} & -\left(\beta^{k+1} - \myalpha{\times}{1}{k+1} + \myalpha{=}{2}{k+1}\right) && \leq 0.
        \label{eq:propagate-D}
    \end{alignat}
    Equation~\eqref{eq:propagate-N} implies that at step~$k+1$,~$\phi(l, k)\notin\brackets{i_{k+1}, j_{k+1}}$ and thus~$\phi(l, k+1) = \phi(l, k)$.
    As such,~$\delta^{k+1}_{\phi(l, k+1)} = \myw_{\phi(l, K)}$ and the equation is satisfied.
    Equation~\eqref{eq:propagate-U} implies that~$\phi(l, k) = i_{k+1}$.
    If then~$\phi(l, k+1) = \phi(l, k)$, we are in Case~\ref{case:(=)} and, on the other hand, if~$\phi(l, k+1) \neq \phi(l, k)$, we are in Case~\ref{case:(x)}.
    Either case sets the correct~$\alpha$ to zero such that the inequality holds.
    Equation~\eqref{eq:propagate-D} works analogously.

    In summary, we have defined a feasible dual solution whose objective value is given by~$\sum_{l=1}^n \tilde{x}^K_l \myw_{\phi(l, K)}$,
    which serves as a certificate for optimality of the primal solution~$\left(\tilde{x}^0, \ldots, \tilde{x}^K\right)$.   
\end{proof}

We are now able to prove the main statement of this section, namely that there exists a compact extended formulation of separation polyhedra for LCIs of sparse knapsack polytopes.
As we have established that there are only polynomially many different equivalence classes of LCIs, it is sufficient to show that our extended formulation applies to a single class~$\mathcal{M}(C, S)$.

\begin{theorem}
    Let~$\hat{x}\in\cube{n}$ and let~$C$ be a minimal cover and~$S$ be a corresponding maximal independent set for a $\sparsity$-sparse knapsack.
    Let~$\bar{P}$ and~$P$ be as defined in~\eqref{eq:polytope-all-cuts} and~\eqref{eq:all-sorted-networks}.
    Then~$\hat{x}\in\bar{P}$ if and only if there exists a point in~$P$ satisfying Constraint \eqref{eq:strongest-cut} applied to~$x^K$. 
\end{theorem}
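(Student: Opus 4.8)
The plan is to prove the two directions separately, using Proposition~\ref{thm:sorting-network-dual-primal} as the workhorse and the characterization of $\bar P$ from Section~\ref{sec:proofComplexity} as the bridge. Recall that, by the discussion around Equation~\eqref{eq:strongest-cut}, a point $\hat x$ lies in $\bar P$ if and only if $\hat x$ satisfies the single inequality~\eqref{eq:strongest-cut}, where the coefficients $\nu_j(i)$ are assigned to the \emph{sorted} entries within each weight class $W_j$. The key observation is that~\eqref{eq:strongest-cut} has the form $\sum_{j=1}^\sparsity \sum_{i=1}^{\card{W_j}} \nu_j(i)\, x_{j_i} \le \card C - 1$ where, within each class, the coefficient sequence $\nu_j(1) \le \nu_j(2) \le \dots$ is \emph{non-decreasing} (this is exactly the content of the case distinction defining $\nu_j$: value $0$ or $1$ on a first block, then $\pi_j$, then $\pi_j+1$). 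So $\nu_j$ plays the role of the ordered coefficients $0 \le \myw_1 \le \dots \le \myw_n$ in Proposition~\ref{thm:sorting-network-dual-primal}, one instance of the proposition per weight class $j$.

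First I would set up the reduction to a single weight class. Since $P$ in~\eqref{eq:all-sorted-networks} is the intersection (over $j$) of the independent sorting-network polytopes $P_j(G_j,\hat x)$, and the left-hand side of~\eqref{eq:strongest-cut} splits as a sum $\sum_{j=1}^\sparsity L_j(x^K)$ with $L_j$ depending only on the coordinates of $x^K$ in $W_j$, minimizing $\sum_j L_j(x^K)$ over $P$ decomposes into $\sparsity$ independent minimization problems, one over each $P_j(G_j,\hat x)$. Applying Proposition~\ref{thm:sorting-network-dual-primal} to each weight class with coefficient vector $(\nu_j(1),\dots,\nu_j(\card{W_j}))$ shows that the minimum of $L_j$ over $P_j(G_j,\hat x)$ is attained at the point $\tilde x^K$ whose entries are $\hat x$ restricted to $W_j$ and sorted non-increasingly, i.e.\ the minimum value equals $\sum_i \nu_j(i)\,(\hat x \text{ sorted within } W_j)_i$. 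Summing over $j$, the minimum of the full left-hand side of~\eqref{eq:strongest-cut} over $P$ equals the left-hand side of~\eqref{eq:strongest-cut} evaluated at $\hat x$, which by the Section~\ref{sec:proofComplexity} characterization is $\le \card C - 1$ precisely when $\hat x \in \bar P$.

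With this in hand, both directions follow. For the forward direction, suppose $\hat x \in \bar P$. Then the minimum of the left-hand side of~\eqref{eq:strongest-cut} over $x^K$-components of points in $P$ is $\le \card C - 1$, and it is attained at the explicit sorted point $(\tilde x^0,\dots,\tilde x^K)$ supplied by Lemma~\ref{lemma:sorting-network-works} (combined across weight classes); this point lies in $P$ and its $x^K$-component satisfies~\eqref{eq:strongest-cut}. For the converse, suppose some point $(x^0,\dots,x^K) \in P$ has $x^K$ satisfying~\eqref{eq:strongest-cut}, i.e.\ $\sum_j L_j(x^K) \le \card C - 1$. Since $\sum_j L_j(x^K)$ is at least its minimum over $P$, which we computed to equal the left-hand side of~\eqref{eq:strongest-cut} at the true sorted $\hat x$, we get that $\hat x$ satisfies~\eqref{eq:strongest-cut}, hence $\hat x \in \bar P$. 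I would also note the monotonicity of the coefficients $\nu_j$ is what makes sorting \emph{non-increasingly} (rather than some other permutation) the right thing to feed Proposition~\ref{thm:sorting-network-dual-primal}; a quick sanity check that $\nu_j$ is non-decreasing in $i$ in both the $\pi_j \ge 1$ and $\pi_j = 0$ cases should be included.

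The main obstacle, I expect, is bookkeeping rather than conceptual: making the decomposition of $P$ over weight classes fully rigorous when the sorting networks $G_j$ act on \emph{disjoint} index sets $W_j$ but the auxiliary vectors $x^k$ live in the common ambient space $\cube n$ — one must check that the constraints of $P_j(G_j,\hat x)$ genuinely involve only the $W_j$-coordinates (true because comparators in $G_j$ only connect wires in $W_j$, and the copy constraints~\eqref{eq:delta-k-l} handle the rest), so that the minimization really does separate. A secondary subtlety is the ``padding with trivial layers'' so that all $G_j$ share the same number of steps $K$; trivial layers introduce only copy constraints and do not affect the argument, but this should be remarked. Once these are dispatched, the proof is a direct assembly of Lemma~\ref{lemma:sorting-network-works}, Proposition~\ref{thm:sorting-network-dual-primal}, and the $\bar P$-characterization from Section~\ref{sec:proofComplexity}.
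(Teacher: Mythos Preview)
Your proposal is correct and follows essentially the same approach as the paper: both arguments hinge on applying Proposition~\ref{thm:sorting-network-dual-primal} weight-class by weight-class to identify the minimum of the left-hand side of~\eqref{eq:strongest-cut} over~$P$ with its value at the sorted copy of~$\hat{x}$, then comparing to~$\card{C}-1$. Your write-up is in fact more explicit than the paper's about the decomposition over weight classes, the monotonicity of the~$\nu_j$, and the bookkeeping issues (disjoint wire sets, padding to a common~$K$), all of which the paper leaves implicit.
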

\begin{proof}
    On the one hand, if~$\hat{x}\notin\bar{P}$, there exists a pair $(C', S') \in \mathcal{M}(C,S)$ generating a violated lifted cover inequality.
    Then so does the strongest representative~$(C, S)^{\max}$.
    This means that Inequality~\eqref{eq:strongest-cut} does not hold for the sorted copy of~$\hat{x}$.
    At the same time, by replacing the coefficients~$\myw_i$ in Proposition~\ref{thm:sorting-network-dual-primal} with the~$\nu_j(i)$ for all~$i \in \naturalsto{\card{W_j}}$ and~$j \in \naturalsto{\sparsity}$, we have that
    \[
      \min\brackets{\sum_{i=1}^{\card{W_j}} \nu_j(i)\cdot x^K_{j_i} : x\in P} \geq \sum_{i=1}^{\card{W_j}} \nu_j(i) \cdot\hat{x}_{j_i}.
    \]
    Therefore their sum over all~$j\in\naturalsto{\sparsity}$ will exceed~$\card{C}-1$.
    As a consequence, the~$K$-th component of each point~$(x^0,\dots,x^K) \in P$ violates~\eqref{eq:strongest-cut}.

    On the other hand, if all LCIs within family~$\mathcal{M}(C,S)$ are satisfied, then Proposition~\ref{thm:sorting-network-dual-primal} gives a solution~$\tilde{x}^K$ such that~$\sum_{i=1}^{\card{W_j}} \nu_j(i)\cdot \tilde{x}^K_{j_i} = \sum_{i=1}^{\card{W_j}} \nu_j(i) \cdot\hat{x}_{j_i}$ and consequently
    \begin{equation*}
        \sum_{j=1}^\sparsity \sum_{i=1}^{\card{W_j}} \nu_j(i)\cdot \tilde{x}^K_{j_i} 
        =\sum_{j=1}^\sparsity \sum_{i=1}^{\card{W_j}} \nu_j(i) \cdot\hat{x}_{j_i} \leq \card{C} -1.
      \end{equation*}
      That is, \eqref{eq:strongest-cut} is satisfied by the~$K$-th component of some point in~$P$.
\end{proof}

\section{Practical Aspects of Using LCIs} 

\label{chap:practical-implementation}

\noindent
We have shown that we can list all non-equivalent minimal covers as well as listing their corresponding maximal independent sets in polynomial time if the knapsack is sparse.
In this section we give a brief overview of some practical considerations that we will make use of in an implementation of the ideas disclosed before.
As said in the proof of Theorem~\ref{thm:complexitySparse}, equivalence classes of the~$\sim$ relationship are uniquely defined by the amount of elements in each weight class selected.
Therefore we can represent sets with short~$\sparsity$-dimensional arrays whose entries correspond to the different weights.
Formally, any set~$S\subseteq\naturalsto{n} = W_1 \cup \dots \cup W_{\sparsity}$ will be written as a tuple~$\left(s_1, \dots, s_\sparsity\right)$ with~$s_j = |S\cap W_j|$ for all~$j\in\naturalsto{\sparsity}$.

\paragraph{Getting Minimal Covers}
The simplest way to find all non-equivalent covers is to exhaustively inspect all tuples, from~$(0, \ldots, 0)$ to~$(|W_1|, \ldots, |W_\sparsity|)$.
The tuple~$(c_1, \ldots, c_\sparsity)$ corresponds to a cover family if~$\sum_{j=1}^\sparsity w_j c_j > \beta$.
Note that since this search is exhaustive, it is no better than any brute-force algorithm.
We settled on a basic reverse lexicographical ordering.
That is, we start with the tuples~$(1, 0, \dots, 0), (2, 0, \dots, 0)$ until~$(\card{W_1}, 0, \dots, 0)$ before inspecting~$(0, 1, 0\dots ,0), (1, 1, 0\dots, 0)$ and so on.
This ordering allows for a couple of enhancements.
\begin{itemize}
    \item Reversing the enumeration. 
        When~$\sum_{i=1}^n a_i \leq 2\beta$, one arguably might need many items in a cover.
        It can then be faster to start from the largest cover and go down to minimal covers.

    \item Skipping steps when the current set is a minimal cover.
        When~$(c_1, \dots, c_{\sparsity})$ is a minimal cover, then all subsequent covers~$(c_1 + 1, c_2, \dots, c_{\sparsity})$ to~$(\card{W_1}, c_2, \dots, c_{\sparsity})$ cannot be minimal.
        We can then skip these~$\card{W_1} - c_1$ iterations.
    \item Make the increment step larger.
        In a similar way to skipping non-minimal covers, one can test if a non-covering set~$(c_1, \dots, c_{\sparsity})$ becomes a cover when replacing~$c_1$ by~$\card{W_1}$.
        If it does, then we can find the minimal one in between with the default enumeration.
        If it does not, then all the steps in between can be skipped. 
    \item Finding the first minimal cover in constant time.
        This is done by iteratively finding how many elements of the~$j$-th weight class are needed to complete the cover assuming the first~$j-1$ of them are all selected, for all~$j =\sparsity$ down to~$1$.
        Each iteration needs only one division with remainder so the total runtime is~$\Oh{\sparsity}$. 
\end{itemize}

\paragraph{Getting the Lifting Coefficients}
Recall that, to obtain a facet-defining inequality from a cover inequality, we need to compute the corresponding~$\mu$ function,~$\pi$ coefficients, and find a maximal independent set~$S$.
Given a minimal cover~$C$ in the form~$(c_1, \dots, c_{\sparsity})$, the values of~$\mu(h)$ and the~$\pi$ coefficients follow immediately.
The generation of maximal independent sets is not as straightforward.
While we could again list all possible non-equivalent sets~$S$, and test if Inequality~\eqref{eq:def_indep_set} holds, independence also requires that all proper subsets~$Q$ are independent.
A naive listing that keeps track of invalid subsets with smaller cardinality is potentially too memory-intensive.
We suggest with Algorithm~\ref{alg:find_next_indepset} a different approach that will considerably lighten the memory burden as well as speeding up the procedure as it does not inspect all possible~$\Oh{n^{\sparsity}}$ sets~$S$.
These benefits come at the expense of potentially skipping certain types of independent sets.
The motivation behind the algorithm comes from a~$2D$ visualization of the criterion in~\eqref{eq:def_indep_set} as we explain next.

The two quantities that change for each set~$S$ in Equation~\eqref{eq:def_indep_set} are~$\sum_{i\in S} a_i$ as well as~$\sum_{i\in S} (\pi_i + 1)$ which can be rewritten as~$a(S)$ and~$\pi(S) + \card{S}$, respectively.
In particular, Inequality~\eqref{eq:def_indep_set} can be seen as a constraint in two dimensions, namely~$y > \mu(x) - \Delta$ when replacing
\begin{equation*}
    \left(x_S, y_S\right),\text{ where } x_S = \pi(S) + \card{S} \text{ and } y_S=a(S).
\end{equation*}
In this representation, we can visualize the location of points~$(x_Q, y_Q)$ for the subsets~$Q\subsetneq S$ in a 2D plot.
In particular, it is in principle possible that two distinct sets~$S, S'$ could end on the same point~$(x_S, y_S)= (x_{S'}, y_{S',})$, but it cannot happen if~$S\subsetneq S'$.
\begin{observation}
    If~$S\subsetneq S'$ then~$(x_S, y_S)< (x_{S'}, y_{S',})$.
\end{observation}
\begin{proof}
    If~$S$ is a strict subset of~$S'$, then~$x_S < x_{S'}$ and~$y_S < y_{S'}$.
    This follows from the fact that~$a(S) < a(S')$. 
    Using the same reasoning for~$x_S$, $(x_S, y_S)< (x_{S'}, y_{S',})$ follows.
\end{proof}

\paragraph{Jumps and Slopes}
With this~$(x, y)$ representation, all singletons~$\brackets{i}$ from each weight class~$W_j$ have~$(x_{\brackets{i}}, y_{\brackets{i}}) = (\pi_j +1, w_j)$.
Since each set~$S$ consists of only~$\sparsity$ different weights types, adding an element of~$W_j$ to the set~$S$ is equivalent to moving the point to~$\left(x_S + \pi_j + 1, y_S + w_j\right)$. 
We refer to such a movement as a \emph{jump} of~$j$.
The point~$(x_S, y_S)$ can then be viewed as the end point of a sequence of jumps form~$(x_{Q_0}, y_{Q_0})$ to~$(x_{Q_{\card{S}}}, y_{Q_{\card{S}}})$, where we call the subsets~$\emptyset = Q_0 \subsetneq Q_1 \subsetneq \dots \subsetneq Q_{\card{S}} = S$ a \emph{jump sequence} of~$S$, as they differ by one element each.
Using this representation and~\eqref{eq:def_indep_set}, a set~$S$ is independent if and only if all jump sequences~$ Q_0 \subsetneq Q_1 \subsetneq \dots \subsetneq Q_{\card{S}}$ are above the boundary~$y = \mu(x)-\Delta$ (see Figure~\ref{fig:2d_representation_indepset}).
\begin{figure}[!htbp]
    \centering
    \begin{tikzpicture}[scale=0.3]
    \draw[step=2cm, gray!20, very thin] (0, -1) grid (15, 15);
    \draw[->, black] (0, -2) -- ( 0, 15) node[anchor=south east] {$y$};
    \draw[->, black] (0,  0) -- (15,  0) node[anchor=north west] {$x$};

    \draw[\myblue] ( 0, -2) -- ( 2,  2) -- (12, 12) -- (15, 12) node[anchor=south] {$\mu(x) - \Delta$};
    \draw[\myblue, dashed] (15, 12) -- (16, 12);

    \draw[black, -stealth]  (0, 0) -- (4, 8)    node[anchor=south east] {$\K$};
    \filldraw[black, radius=2pt] (4, 8) circle;
    \filldraw[black, radius=2pt] (2, 6) circle;
    \draw[\mygreen, -stealth]  (0, 0) -- (2, 6)    node[anchor=south east] {$\Q$};
    \filldraw[black, radius=2pt] (4, 12) circle node[anchor=south]      {$2\Q$};
    \filldraw[black, radius=2pt] (6, 14) circle node[anchor=south]      {$\Q + \K$};
    \draw[\myred, -stealth]    (0, 0) -- (4, 2)    node[anchor=west]       {$\J$};
    \draw[\mygreen, -stealth]  (2, 6) -- (6, 8);
    \draw[\myred, -stealth]    (4, 2) -- (6, 8)    node[anchor=south]      {$\Q + \J$};

\end{tikzpicture}
    \caption{
        $2D$ visualization of independent sets for a given knapsack.
        In blue the boundary delimits the region above which the inequality~\eqref{eq:def_indep_set} holds.
        In red and green different jump sequences leading to the set~$(1, 1, 0)$. The red sequence highlights the fact that~$(1, 0, 0)$, one element of weight~$w_1$, is not independent, and therefore~$(1, 1, 0)$ cannot be independent.
    }
    \label{fig:2d_representation_indepset}
\end{figure}
Note that~$\mu$ is only defined for positive integer values, but in the following figures we will extend it linearly between each integer points.
We allow ourselves this abuse of notation as all the points we will compare to the boundary~$y=\mu(x)-\Delta$ will have integer coordinates.
\begin{observation}
    The set~$\brackets{(x, y)\in \R_+\times\R_+ : y \leq \mu(x) - \Delta}$ is convex.
    \label{obs:mu-is-concave}
\end{observation}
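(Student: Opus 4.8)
The plan is to show that the set $\brackets{(x,y) \in \R_+ \times \R_+ : y \leq \mu(x) - \Delta}$ is the intersection of convex sets, which suffices. The key structural fact is that $\mu$, viewed as the piecewise-linear extension described right before the statement, is a concave function on $\R_+$. Once concavity of $\mu$ is established, the set $\brackets{(x,y) : y \leq \mu(x) - \Delta}$ is the subgraph (hypograph) of the concave function $x \mapsto \mu(x) - \Delta$, and hypographs of concave functions are convex; intersecting with the first quadrant $\R_+ \times \R_+$ preserves convexity.

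So the work reduces to proving that $\mu$ is concave. Recall from~\eqref{eq:def_of_mu} that for integer $h$, $\mu(h) = \sum_{i=1}^{\min\{h, \card{C}\}} a_{j_i}$, where the elements of the cover are sorted so that $a_{j_1} \geq a_{j_2} \geq \dots \geq a_{j_{\card{C}}}$. The increments are $\mu(h) - \mu(h-1) = a_{j_h}$ for $1 \leq h \leq \card{C}$ and $\mu(h) - \mu(h-1) = 0$ for $h > \card{C}$. Because the $a_{j_i}$ are sorted non-increasingly and are eventually zero, the sequence of successive increments is non-increasing in $h$. A piecewise-linear function whose breakpoints are the integers and whose successive slopes form a non-increasing sequence is concave; this is exactly the linear interpolation we adopt. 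I would state this as a short lemma-free argument: for any $x \leq x'$ in $\R_+$ and $\lambda \in [0,1]$, concavity $\mu(\lambda x + (1-\lambda)x') \geq \lambda \mu(x) + (1-\lambda)\mu(x')$ follows from the slope monotonicity, either by a direct interpolation estimate or by invoking the standard characterization of concavity of piecewise-linear functions via non-increasing slopes.

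I do not anticipate a serious obstacle here; the only mild subtlety is being careful about the linear-interpolation convention, since $\mu$ is a priori only defined at integers. The note immediately preceding the statement already fixes this convention, so I would simply reference it and make explicit that under this convention the slope on the interval $[h-1, h]$ equals $a_{j_h}$ (or $0$ once $h > \card{C}$), and that these slopes are non-increasing in $h$ precisely because the cover was sorted by non-increasing weight. With that in hand the hypograph-of-a-concave-function argument closes the proof in one line.
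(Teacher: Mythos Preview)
Your proposal is correct and follows essentially the same approach as the paper: both reduce to showing that the piecewise-linear extension of~$\mu$ is concave by observing that its successive slopes~$a_{j_1} \geq a_{j_2} \geq \dots \geq a_{j_{\card{C}}} \geq 0$ are non-increasing, and then conclude that the hypograph is convex. The paper's proof is just a slightly terser version of the same argument.
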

\begin{proof}
    It suffices to show that the linear extension of~$\mu$ is concave.
    When~$h>\card{C}$,~$\mu(h) = \mu(\card{C})$ therefore we only need to show that~$\mu$ is concave between~$0$ and~$\card{C}$.
    Since~$\mu(h)$ is the sum of the~$h$ heaviest weights in~$C$, the slope between~$h$ and~$h+1$ of~$\mu$ is~$\mu(h+1)-\mu(h) = a_{j_{h+1}}$.
    Because~$a_{j_{i}} \leq a_{j_{h}}$ for any~$h<i$, we then have that the slopes are not increasing, meaning~$\mu$ must be concave.
\end{proof}
To check whether all jump sequences are above the boundary~$y = \mu(x) - \Delta$, the next lemma states that it is not necessary to inspect all~$\card{S}!$ orderings of~$S$.
Instead, it is sufficient to check one particular jump sequence.
\begin{lemma}
    \label{lemma:convex-path-vs-border}
    Let~$\mu\colon\R_+\rightarrow \R$ be any function.
    Let~$v_1 \leq \dots \leq v_n\in\R_+$ be scalars and~$\gamma$ a permutation on~$\naturalsto{n}$.
    We define~$f_{\gamma}\colon[0, n]\rightarrow\R$ the piecewise linear function with breakpoints~$\brackets{0, 1,\dots, n}$ and slopes~$(v_{\gamma(1)}, \dots, v_{\gamma(n)})$.
    If there exists a permutation~$\gamma'$ and a real~$s\in[0, n]$ such that~$f_{\gamma'}(s) \leq \mu(s)$, then~$f_{\text{Id}}(s) \leq \mu(s)$.
\end{lemma}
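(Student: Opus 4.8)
The plan is to prove something slightly stronger than the statement: for \emph{every} permutation $\gamma$ and \emph{every} $s\in[0,n]$ one has $f_{\text{Id}}(s)\le f_\gamma(s)$, i.e.\ the identity permutation produces the pointwise-smallest of the functions $f_\gamma$. Granting this, the lemma is immediate by applying the inequality with $\gamma=\gamma'$ at the abscissa $s$ supplied by the hypothesis: $f_{\text{Id}}(s)\le f_{\gamma'}(s)\le\mu(s)$. Note that $\mu$ plays no role beyond this final chaining, which is why the lemma can allow $\mu$ to be arbitrary.

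To prove the pointwise claim I would first record the explicit form of $f_\gamma$. All these functions share the anchor value at $0$ (only the slopes depend on $\gamma$, not the breakpoints), so without loss of generality $f_\gamma(0)=0$; since the linear pieces have unit length, for $s\in[k,k+1]$ with $k\in\naturalsto[0]{n-1}$ and $t\define s-k\in[0,1]$ we get $f_\gamma(s)=\sum_{i=1}^{k}v_{\gamma(i)}+t\,v_{\gamma(k+1)}$. The key move is to rewrite this as a single weighted sum $f_\gamma(s)=\sum_{i=1}^{n}c_i\,v_{\gamma(i)}$ with a weight vector $c=c(s)$ depending only on $s$, namely $c_i=1$ for $i\le k$, $c_{k+1}=t$, and $c_i=0$ for $i>k+1$. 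Because $1\ge t\ge0$, this vector is non-increasing: $c_1\ge c_2\ge\dots\ge c_n\ge0$.

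Now the statement is exactly the rearrangement inequality: for a non-increasing weight sequence $(c_i)_{i=1}^n$ and the non-decreasing sequence $(v_i)_{i=1}^n$, the sum $\sum_{i=1}^{n}c_i v_{\gamma(i)}$ is minimised over all permutations $\gamma$ by pairing the largest weights with the smallest values, i.e.\ by $\gamma=\text{Id}$; hence $f_{\text{Id}}(s)=\sum_i c_i v_i\le\sum_i c_i v_{\gamma(i)}=f_\gamma(s)$. If a self-contained argument is preferred over citing rearrangement, the same conclusion follows from a one-line exchange step: whenever $p<q$ but $\gamma(p)>\gamma(q)$, swapping these two assignments changes $f_\gamma(s)$ by $(c_p-c_q)(v_{\gamma(q)}-v_{\gamma(p)})\le0$, so bubble-sorting $\gamma$ into the identity never increases the value. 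The boundary case $s=n$ is covered by $k=n-1,\ t=1$ (and is anyway trivial, since $f_\gamma(n)=\sum_i v_i$ for all $\gamma$).

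I do not expect a genuine obstacle here. The only two points needing a little care are (i) phrasing the intermediate claim so it holds at non-integer $s$ as well — handled uniformly by the weight-vector reformulation — and (ii) observing that the monotonicity $c_1\ge\dots\ge c_n$ relies on $t\le1$, i.e.\ on the breakpoints being unit-spaced so that each linear piece of $f_\gamma$ consumes exactly one slope $v_{\gamma(i)}$.
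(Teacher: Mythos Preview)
Your proof is correct and follows the same core idea as the paper: both establish the pointwise inequality $f_{\text{Id}}(s)\le f_\gamma(s)$ for every $s$ and every $\gamma$, from which the claim follows trivially. The paper argues this first at integer abscissae via the partial-sum bound $\sum_{k=1}^h v_k\le\sum_{k=1}^h v_{\gamma(k)}$ and then extends to real $s$ by piecewise linearity, whereas you treat all $s$ at once through the weight-vector reformulation and the rearrangement inequality; your version is slightly more explicit about the non-integer case, but the underlying argument is the same.
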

\begin{proof}
    Since~$v_1 \leq \dots\leq v_n$, we have that for any permutation~$\sum_{k=1}^h v_k \leq \sum_{k=1}^h v_{\gamma(k)}$.
    This means that~$f_{\gamma}(h) \geq f_{\text{Id}}(h)$ for all integer~$h$, and it easily extends to real values as well since~$f_{\gamma}$ is linear between integer values.
\end{proof}
This lemma motivates Algorithm~\ref{alg:greedy_indep_set} to find maximal independent sets:
We will build independent sets by making jumps such that the corresponding piecewise linear function is convex and stays in the region~$y > \mu(x)-\Delta$.
Drawing the graph of the function for all possible sets~$S$ will then have a tree-like structure and the maximal independent sets correspond to the endpoints of the branches that have not touched the boundary.
We devise a depth-first search algorithm to list all these endpoints.
Note that in the~$2D$ representation some branches may seem to connect or overlap, but the implicit structure is still that of a tree (see Figure~\ref{fig:tree-of-paths}).
We first reorder the weight classes~$W_j$ for all~$j\in\naturalsto{\sparsity}$ by comparing their \emph{slope}~$p_j$.
The algorithm can then explore this tree by choosing the smallest slope at each branching.
\begin{figure}[tb]
    \centering
    \begin{tikzpicture}[scale=0.25]
        \draw[step=2cm, gray!20, very thin] (0, 0) grid (17, 15);
        \draw[->, black] (0,  0) -- ( 0, 15) node[anchor=north east] {$y$};
        \draw[->, black] (0,  0) -- (17,  0) node[anchor=south] {$x$};

        \draw (0, 0) -- (4, 2) --(8, 4) -- (10, 6) -- (12, 8) -- (14, 10) -- (16, 14);
        \draw (12, 8) -- (14, 12);
        \draw (10, 6) --(12, 10);
        \draw (8, 4) -- (10, 8);
        \draw (4, 2) -- (6, 4) -- (8, 6) -- (10, 8) -- (12, 12);
        \draw (8, 6) -- (10, 10);
        \draw (6, 4) -- (8, 8);
        \draw (4, 2) -- (6, 6);
        \draw (0, 0) -- (2, 2) --(4, 4) --(6, 6) -- (8, 10);
        \draw (4, 4) -- (6, 8);
        \draw (2, 2) -- (4, 6);
        \draw (0, 0) -- (2, 4);

        \draw[very thick, ->] (0, 0) -- (4, 2) node[anchor=north west] {$a$};
        \draw[very thick, ->] (0, 0) -- (2, 2) node[anchor=west] {$b$};
        \draw[very thick, ->] (0, 0) -- (2, 4) node[anchor=south east] {$c$};
        
        \filldraw   ($(10, 8) + (10pt, 0pt)$) -- 
                    ($(10, 8) - (0pt, 10pt)$) --
                    ($(10, 8) - (10pt, 0pt)$) --
                    ($(10, 8) + (0pt, 10pt)$);
        \filldraw (6, 6) circle[radius=6pt];
        \filldraw[white] ($(10, 8) + (5pt, 0pt)$) -- 
                    ($(10, 8) - (0pt, 5pt)$) --
                    ($(10, 8) - (5pt, 0pt)$) --
                    ($(10, 8) + (0pt, 5pt)$);
        \filldraw[white] (6, 6) circle[radius=3pt];
    \end{tikzpicture}
    \caption{
        Union of all convex paths with at most~$2$ times~$a=(2, 1)$, ~$3$ times~$b=(1, 1)$ and once~$c=(1, 2)$. 
        Note that the white dot and white diamond appear to be on two paths at the same time.
        However, with the diamond for example, one arises from the set~$(2a, 0b, 1c)$ and the other from~$(1a, 3b, 0c)$ so neither is a subset of the other.
    }
    \label{fig:tree-of-paths}
\end{figure}

\begin{algorithm}[htb]
    \label{alg:greedy_indep_set}
    \SetAlgoLined
    \DontPrintSemicolon
    \SetKwInOut{Input}{input}
    \SetKwInOut{Output}{output}
    \Input{an array~$s$ representing an independent set, an index~$m$ and a permutation~$\gamma$ on~$\naturalsto{\sparsity}$ such that~$p_{\gamma(j)} \leq p_{\gamma(j+1)}$ for all~$j<\sparsity$.}
    \Output{$s$ representing an independent set, maximal with respect to the first~$m$ fixed entries}
    $(x, y) \gets (0, 0)$\; 
    \For{$j = 1$ to~$m$}{
        $(x, y) \gets (x, y) + s_{\gamma(j)} \cdot (\pi_{\gamma(j)} + 1, w_{\gamma(j)})$\tcp*{read fixed part}
    }
    \For{$j = m + 1$ to~$\sparsity$}{
        $s_{\gamma(j)} \gets 0$\tcp*{greedy alg. on the remaining entries}
        \For{$k = 1$ to~$|W_{\gamma(j)}| - |C\cap W_{\gamma(j)}|$}{
            \eIf{$y + w_{\gamma(j)} > \mu(x + (\pi_{\gamma(j)} + 1)) - \Delta$}{
                $(x, y) \gets (x, y) + (\pi_{\gamma(j)} + 1, w_{\gamma(j)})$\;
                $s_{\gamma(j)} \gets s_{\gamma(j)} + 1$\;
            }{ 
                break\;
            }
        }
    }
    \Return{$s$}\;
    \caption{Greedily find a maximal independent set while preserving the first~$m$ entries}
\end{algorithm}

The first independent set can be found via a greedy search, which is what Algorithm~\ref{alg:greedy_indep_set} does when~$m = 0$.
Start the branch with the jumps of the weight class with the smallest ratio~$p_j = \frac{w_j}{\pi_j + 1}$.
Note that it is possible for different weights to have the same slopes.
In that case prioritize the one whose jump is the longest, or equivalently whose coefficient~$\pi_j$ is the largest.
We can then iteratively take as many elements of the current weight class as possible,  until it is either empty or the branch reaches the boundary, before considering the next smallest slope to find a maximal independent set.
Such algorithm necessarily produces a sequence whose function in our~$2D$ representation will be convex.

Intuitively, the next independent set can be found by going back a few steps on the branch, and choosing a larger slope earlier (making the resulting function still convex, but slightly steeper).
Assuming the branch we were at ended with the set~$(s_1, \dots, s_{\sparsity})$, let~$j^\star = \argmax\brackets{j\in \naturalsto{\sparsity} : s_{\gamma(j)} > 0}$.
Going one step back on the branch would end on the point where~$s_{\gamma(j^\star)}\gets s_{\gamma(j^\star)} -1$.
One can then get a new convex function by starting from this point and using the same greedy search, but on the remaining entries~$j^\star + 1$ to $\sparsity$.
This is what Algorithm~\ref{alg:find_next_indepset} does inside the \texttt{while} loop.
The~$m$ parameter in Algorithm~\ref{alg:greedy_indep_set} indicates how many of the~$s_1, \dots, s_{\sparsity}$ to fix before the greedy search between lines~$5$ to $15$.
Observe that the output independent sets that are not maximal will appear right after the ones they are subset of.
This is a consequence of the algorithm being a depth-first search.
In particular, if the current independent set has~$s_{\gamma(\sparsity)} > 0$ then this algorithm will next output the same set but with~$s_{\gamma(\sparsity)}\gets s_{\gamma(\sparsity)} - 1$ as next independent set.
This is why we skip index~$\sparsity$ in Algorithm~\ref{alg:find_next_indepset} as we know that these will not be maximal anyways.
In general, we only need to keep track of the current independent set and compare it to the new one to check for maximality.
\begin{algorithm}[tb]
    \label{alg:find_next_indepset}
    \SetAlgoLined
    \DontPrintSemicolon
    \SetKwInOut{Input}{input}
    \SetKwInOut{Output}{output}
    \Input{an array~$s$ representing a maximal independent set.}
    \Output{$s$ representing a new maximal independent set if possible, otherwise outputs~$0$.}
    $s^{\text{init}} = s$\;
    \While{$s \leq s^{\text{init}}$}{
        $j^\star \gets \max\brackets{j : 1 \leq j \leq \sparsity - 1, s_{\gamma(j)} > 0}$\; 
        $s_{\gamma(j^\star)} \gets s_{\gamma(j^\star)} -1$\;
        run Algorithm~\ref{alg:greedy_indep_set} on~$s$ with~$m= j^\star$\;
    }
    \Return{$s$}\;
    \caption{Finds the next maximal independent set}  
\end{algorithm}

While Lemma~\ref{lemma:convex-path-vs-border} gives a guarantee to find a function that necessarily violates~$y>\mu(x) - \Delta$ if any of its reorderings also does, it only does so for continuous functions.
In the previous discussion, the branches are made of discrete jumps.
However, since~$\mu$ is concave, it is possible that one jump passes under the boundary and ends sufficiently far to land back in the feasible region.
This special case can be detected by splitting jumps of length~$\pi_j + 1$ in~$x$-direction into several jumps of length~$1$.
Unfortunately it does not necessarily mean that the set it corresponds to is not independent (see edge case in Figure~\ref{fig:complicated_indepset}).
For our current implementation, we have decided to only allow for branches that do not intersect the boundary in any way.
These cases will then result in some non-maximal independent sets, and hence the corresponding lifted cover inequalities are not facet-defining.
We settled on this tradeoff for computational time as these edge cases were very rarely observed during our tests.

\begin{figure}[htb]
    \centering
    \begin{tikzpicture}[scale=0.5]
    \draw[very thin, gray!20, step=1] (0, 0) grid (4.5, 5.5);
    \draw[black, ->] (0, 0) -- (4.5, 0) node[anchor=north west] {$x$};
    \draw[black, ->] (0, 0) -- (0, 5.5) node[anchor=south west] {$y$};

    \draw[\myblue] (1, 0) -- (2, 3) -- (4, 3);
    \draw[\myblue, dashed] (4, 3) -- (5, 3) node[anchor=north] {$\mu(x) - \Delta$};

    \draw[\mygreen, -stealth] (0, 0) -- (1, 1) node[anchor=west      ] {$1$};
    \draw[\myred, -stealth] (0, 0) -- (2, 3) node[anchor=north west] {$3$};
    \draw[\mygreen, -stealth] (0, 0) -- (2, 4) node[anchor=south east] {$4$};

    \draw (8, 2.5) node {$\Rightarrow$};
\end{tikzpicture}
\begin{tikzpicture}[scale=0.5]
    \draw[very thin, gray!20, step=1] (0, 0) grid (4.5, 5.5);
    \draw[black, ->] (0, 0) -- (4.5, 0) node[anchor=north west] {$x$};
    \draw[black, ->] (0, 0) -- (0, 5.5) node[anchor=south west] {$y$};

    \draw[\myblue] (1, 0) -- (2, 3) -- (4, 3);
    \draw[\myblue, dashed] (4, 3) -- (5, 3);

    \draw[\mygreen, -stealth] (0, 0) -- (1, 1);
    \draw[\myred, -stealth] (0, 0) -- (2, 3);

    \draw[dashed, \myred] (1, 1) -- (3, 4) node[anchor=south] {$1+3$};
    \draw[\myred, -stealth] (2, 3) -- (3, 4);

    \filldraw[white] (5, 0) rectangle (6, 1);
\end{tikzpicture}
\begin{tikzpicture}[scale=0.5]
    \draw[very thin, gray!20, step=1] (0, 0) grid (4.5, 5.5);
    \draw[black, ->] (0, 0) -- (4.5, 0) node[anchor=north west] {$x$};
    \draw[black, ->] (0, 0) -- (0, 5.5) node[anchor=south west] {$y$};

    \draw[\myblue] (1, 0) -- (2, 3) -- (4, 3);
    \draw[\myblue, dashed] (4, 3) -- (5, 3);

    \draw[\mygreen, -stealth] (0, 0) -- (1, 1);
    \draw[\mygreen, -stealth] (0, 0) -- (2, 4);

    \draw[dashed, \myred, -stealth] (1, 1) -- (3, 5);
    \draw[\mygreen, -stealth] (2, 4) -- (3, 5) node[anchor=south] {$1+4$};
\end{tikzpicture}
    \caption{
        Example of a knapsack cover whose independent set is difficult to compute. 
        Let the weights be~$\brackets{1, 3, 4}$ and capacity~$3$. 
        For the cover~$C = (0, 2, 0)$ the only maximal independent set is the union of one element of weight~$1$ and all the available elements of weight~$4$.
        If the algorithm does not check for jumps passing under the boundary, it would wrongly declare the set~$(1, 1, 0)$ as independent.
        If it does check for intersection with the boundary, it would not find the independent set~$(1, 0, 1)$.
    }
    \label{fig:complicated_indepset}
\end{figure}

\paragraph{Incorporating GUBs}
Another way to strengthen the LCIs even further is to make use of other information from the MIP instance the knapsack arose from.
One useful type of constraint is a group of $x(L_i) \leq 1$ for some non-overlapping sets~$L_1, \dots, L_m$.
When~$\card{L_i} = 1$ the inequality reduces to a classical variable bound.
We can then assume without loss of generality that these constraint partition the variable-space.
These are commonly referred to as \emph{generalized upper bound} constraints, or GUBs in short~\cite{dantzig1967generalized}. 
We can combine these with our lifted cover inequalities to strengthen the cuts.
Recall that one special case for our LCIs was when a weight class~$W_j$ induced coefficients~$\pi_j = 0$. 
Then all coefficients in the LCI for that weight class are either zero or one.
We can then augment the inequality by setting the coefficients with indices~$i\in W_j\setminus (C\cup S)$ that share a GUB with some other~$i'\in W_j\cap(C\cup S)$ to one.
In other words, we incorporate some information from the GUB into the LCI, which do not necessarily align with the sparsity patterns of the knapsack as they are ``external''.
This addition guarantees a fair comparison with benchmark methods as we have observed that they frequently leverage GUB information.

\section{Numerical Experience}
\label{sec:numerics}

\noindent
In the preceding sections, we have discussed two approaches for exploiting
lifted cover inequalities (LCIs) when solving mixed-integer programs
containing sparse knapsack constraints:
an extended formulation, which adds a polynomial number of auxiliary
variables and constraints to enforce that a solution adheres to all LCIs,
as well as a separation algorithm that separates LCIs for sparse knapsack
constraints in polynomial time.
This section's aim is to investigate the impact of these two approaches on
solving mixed-integer programs.
In Section~\ref{sec:numericsEF}, we focus on extended formulations for LCIs
for a particular class of knapsack polytopes, whereas
Section~\ref{sec:numericsSepa} reports on numerical experience of
separating LCIs for sparse knapsacks without using auxiliary variables.

\paragraph{Computational Setup}
All our techniques have been implemented in the open-source solver
\scip~9.0.1~\cite{bolusani2024scip} with LP-solver \solver{Soplex}~7.0.1.
\scip has been compiled with the external software
\solver{sassy}~1.1~\cite{AndersEtAl2023} and
\solver{bliss}~0.77~\cite{JunttilaKaski2011} for detecting
symmetries.
Our implementation is publicly available at GitHub\footnote{\url{https://github.com/Cedric-Roy/supplement_sparse_knapsack}} and~\cite{release}.

All of the following experiments have been conducted on a Linux cluster
with Intel Xeon E5-1620 v4~\SI{3.5}{\giga\hertz} quad core processors
and~\SI{32}{\giga\byte} of memory.
The code was executed using a single thread.
When reporting the mean of~$n$ numbers~$t_1,\dots,t_n$, we use the shifted
geometric mean~$\prod_{i=1}^n (t_i + s)^{\nicefrac{1}{n}} - s$ with
shift~$s=1$ to reduce the impact of outliers.

The implementation follows the principles explained in Section~\ref{chap:practical-implementation}.
Namely for each knapsack inequality, we exhaustively iterate over all non-equivalent minimal covers, and for each cover we use our modified search (Algorithm~\ref{alg:find_next_indepset}) of independent sets to create non-equivalent lifted cover inequalities.
To separate LCIs, we use the separation algorithm described in the proof of Theorem~\ref{thm:complexitySparse}, i.e., for a sorted point~$\bar{x}$, we find for every family of minimal covers~$C$ and independent sets~$S$ an LCI with maximum left-hand side value w.r.t.~$\bar{x}$ in~$\Oh{n}$ time.
This LCI is possibly enhanced by GUB information as described in the previous section, and used as a cutting plane if it is violated by~$\bar{x}$.
The implementation of the extended formulation via sorting network underwent a preliminary test setup described in the following section.

\subsection{Evaluation of the Extended Formulation}
\label{sec:numericsEF}

\noindent
Our first experiment concerns the impact of extended formulations for LCIs.
In contrast to the results of Section~\ref{sec:sorting-network-polytope} that show how sorting
networks can be used to derive an extended formulation for LCIs for
arbitrary (sparse) knapsacks, we focus on a particular class of knapsacks,
so-called orbisacks~\cite{KaibelLoos2011}, which we will explain in more
details below.
The motivation for considering orbisacks rather than general knapsacks is
two-fold.
On the one hand, orbisacks arise naturally in many problems.
This allows to draw conclusions on a broad range of instances and the
effect of handling LCIs via an extended formulation is less likely to be
biased by problem structures present in a narrow class of instances.
On the other hand, orbisacks have~$2^{\Theta(n)}$ many LCIs that can be
modeled via an extended formulation containing~$O(n)$ variables and
constraints.
In contrast to the general sorting networks of Section~\ref{sec:sorting-network-polytope}, we thus can
make use of a tailored implementation for orbisacks, which is arguably more
effective than using a general extended formulation that does not exploit
specific structures of the underlying knapsack.
The numerical results therefore can better reveal the potential of extended
formulations for handling LCIs.

\paragraph{Background on Orbisacks}
The orbisack~\cite{KaibelLoos2011} is defined as
\[
  \orbisack_n
  \define
  \conv
  \Big\{
  x \in \B{n \times 2} :
  \sum_{i = 1}^n 2^{n-i}(x_{i,2} - x_{i,1}) \leq 0
  \Big\},
\]
and the vertices of~$\orbisack_n$ are all binary matrices whose first
columns are not lexicographically smaller than their second columns.
Orbisacks can be used to handle symmetries in mixed-integer
programs~\cite{HojnyPfetsch2019} and many of the instances of the mixed-integer programming
benchmark library MIPLIB2017~\cite{MIPLIB} allow their symmetries to be handled
by orbisacks; cf.~\cite{PfetschRehn2019}.

Note that orbisacks are not standard knapsack polytopes, because the defining
inequality has positive and negative coefficients.
By replacing, for each~$i \in [n]$, variable~$x_{i,1}$ by~$\bar{x}_{i,1} = 1 -
x_{i,1}$, however, it can be turned into a standard knapsack polytope
\[
  \orbisacktrans_n
  =
  \conv
  \Big\{
  x \in \B{n \times 2} :
  \sum_{i=1}^n 2^{n-i}(x_{i,1} + x_{i,2}) \leq 2^n - 1
  \Big\},
\]
and all LCIs derived from~$\orbisacktrans_n$ can be transformed back into
facet defining inequalities for~$\orbisack_n$.
Since the vertices of~$\orbisacktrans_n$ are matrices, a minimal cover
consists of tuples~$(i,j)$ with~$i \in [n]$ and~$j \in \{1,2\}$.
The minimal covers~$C$ of~$\orbisacktrans_n$ are characterized by an
index~$i^\star \in [n]$ and a vector~$\tau \in \{1,2\}^{i^\star - 1}$ such
that~$C = \{(i^\star,1),(i^\star,2)\} \cup \{(i,\tau_i) : i \in [i^\star -
1]\}$; see~\cite[Prop.~4]{Hojny2020} applied to the consecutive partition
in which all cells have size~2.
Moreover, one can show that all sequential liftings of a minimal
cover~$C$ with~$i^\star > 1$ result in the LCI
\[
  x_{1,1} + x_{1,2} + \sum_{i = 2}^{i^\star - 1} x_{i,\tau_i} +
  x_{i^\star,1} + x_{i^\star,2} \leq i^\star;
\]
for~$i^\star = 1$, the unique LCI is~$x_{1,1} + x_{1,2} \leq 1$.
As a consequence, there are~$2^{n-1}$ LCIs.
In the original variable space of the orbisack, the latter inequality reads
as~$-x_{1,1} + x_{1,2} \leq 0$, whereas the former inequality turns into
\begin{equation}
  \label{eq:orbisackCover}
  -x_{1,1} + x_{1,2} -x_{i^\star,1} + x_{i^\star,2}
  -\sum_{\substack{i \in [2,i^\star -1]\colon\\ \tau_i = 1}} x_{i,1}
  -\sum_{\substack{i \in [2,i^\star - 1]\colon\\ \tau_i = 2}} x_{i,2}
   \leq i^\star - T(\tau) - 2,
 \end{equation}
 where~$T(\tau) = \card{\{i \in \{2,\dots,i^\star-1\} : \tau_i = 1\}}$.

\paragraph{Extended Formulations for Orbisacks}
We now turn to an extended formulation based on~$P$, the sorting network polytope from Section~\ref{sec:sorting-network-polytope}.
Let~$x \in [0,1]^{n \times 2}$ be the variable matrix associated with an
orbisack.
Moreover, we introduce variables~$y_i$ for~$i \in [2,n-1]$ together with
the inequalities
\begin{subequations}
  \label{eq:EForbisack}
  \begin{align}
    -x_{i,1} &\leq y_i, && i \in [2,n-1],\\
    x_{i,2} &\leq 1 + y_i, && i \in [2,n-1],\\
    -x_{1,1} + x_{1,2} &\leq 0, &&\\
    -x_{1,1} + x_{1,2} -x_{i^\star,1} + x_{i^\star,2} + \sum_{i=2}^{i^\star}
    y_i&\leq 0, && i^\star \in [2,n]\label{eq:EForbisackLCI}\\
    x_{i,j}& \in [0,1], && (i,j) \in [n] \times [2],\\
    y_i & \in [-1,0], && i \in [2,n-1].
  \end{align}
\end{subequations}
We claim that~\eqref{eq:EForbisack} defines an extended formulation of Section~\ref{sec:sorting-network-polytope}.
Indeed, due to the first two families of inequalities, $y_i \geq
\max\{x_{i,2}-1, -x_{i,1}\}$.
Define, for~$i^\star \in [2,n]$, vector~$\tau \in \{1,2\}^{[2,i^\star-1]}$
to take value~1 if and only if~$-x_i \geq x_{i,2} - 1$.
Then,
\begin{align*}
  \sum_{i=2}^{i^\star-1} y_i
  &\geq
  \sum_{\substack{i \in [2,i^\star -1]\colon\\ \tau_i = 2}} x_{i,2}
  -
  \sum_{\substack{i \in [2,i^\star -1]\colon\\ \tau_i = 1}} x_{i,1}
  -
  \card{\{i \in [2,i^\star - 1] : \tau_i = 2\}}\\
  &=
  \sum_{\substack{i \in [2,i^\star -1]\colon\\ \tau_i = 2}} x_{i,2}
  -
  \sum_{\substack{i \in [2,i^\star -1]\colon\\ \tau_i = 1}} x_{i,1}
  -
  (i^\star - T(\tau) - 2).
\end{align*}
Consequently, every vector~$x \in [0,1]^{n \times 2}$ for which there
exists~$y \in \R^{[n-2]}$ such that~$(x,y)$
satisfies~\eqref{eq:EForbisack}, Inequality~\eqref{eq:EForbisackLCI} implies
that~$x$ satisfies~\eqref{eq:orbisackCover}.
Moreover, if~$x$ violates an LCI~\eqref{eq:orbisackCover}, also no~$y$
exists such that~$(x,y)$ satisfies~\eqref{eq:EForbisack}.
Since~\eqref{eq:EForbisack} also contains the only LCI that is not of
type~\eqref{eq:EForbisackLCI}, namely~$-x_{i,1} + x_{i,2} \leq 0$,
System~\eqref{eq:EForbisack} is an extended formulation.

\paragraph{Implementation Details}
\scip offers many possibilities for handling symmetries of mixed-integer
programs.
The high level steps of symmetry handling within \scip are to compute
symmetries, check whether some symmetries form a special group that can be
handled by effective techniques, and use some basic techniques for the
remaining symmetries.
The propagation of orbisacks and separation of LCIs falls into the latter
category.
To enforce that orbisacks are used whenever possible in this category, we
set the parameter \texttt{misc/usesymmetry} to value~1 and
\texttt{propagating/symmetry/usedynamicprop} to \texttt{FALSE}.
Moreover, we introduced two new parameters.
The first parameter allows to switch between
\scip's default techniques for handling orbisacks and extended
formulations.
That is, we either use \scip's default techniques or an extended
formulation.
The second parameter controls the maximum value of~$n$, i.e., the number of
rows, that we allow in matrices constrained by orbisacks.
When the number of rows of an orbisack exceeds the value~$k$ of the
parameter, we still define an extended formulation for the orbisack, but we
restrict the LCIs to the first~$k$ rows.
Note that this still allows to solve an instance correctly, because
orbisacks are only used to handle symmetries, but are no model constraints.
The motivation for this parameter is to avoid a blow-up of the model, which
turns out to be useful as we will see below.

\paragraph{Numerical Results}
The aim of our experiments is to compare the approach of handling LCIs via
an extended formulation and an exact separation routine for LCIs.
To this end, we compare our extended formulation~\eqref{eq:EForbisack} with
the build-in propagation and separation routines for orbisacks.
Moreover, we compare our extended formulation for LCI separation with two
extended formulations~\cite{KaibelLoos2011} of the orbisack itself, i.e.,
their projection onto the original variables yields~$O_n$.
For our purposes, it is only important that the second extended formulation
has~$3n$ variables and~$6n$ constraints ($8n$ when including non-negativity constraints), whereas the third extended
formulation has~$4n$ variables and~$3n$ constraints ($7n$ when including non-negativity constraints).
For further details, we refer the reader to~\cite{KaibelLoos2011}.

We have conducted experiments on a test set of~191 instances of MIPLIB2017
for which \scip applies orbisacks in the setting mentioned above.
In the experiments, we test four different settings:
\begin{description}
\item[default] uses \scip's default techniques to handle orbisacks by
  propagation and separation; cf.~\cite{HojnyPfetsch2019};
\item[EF1] uses extended formulation~\eqref{eq:EForbisack};
\item[EF2] uses the extended formulation from~\cite{KaibelLoos2011} with
  fewer variables;
\item[EF3] uses the extended formulation from~\cite{KaibelLoos2011} with
  more variables.
\end{description}
Moreover, for the extended formulations, we limit the number of rows of
orbisacks to~10 and~30, respectively.
We use a time limit of~\SI{7200}{\s} per instance; instances not solved to
optimality contribute~\SI{7200}{\s} to the mean running time.
Moreover, we experienced numerical instabilities of the LP solver for some
instances, which led to an early termination of \scip; these instances have
been excluded from the evaluation to obtain unbiased results. 
To evaluate the impact of the different techniques based on the difficulty
of instances, we extracted different subsets of instances.
The subset denoted by~$(t,7200)$ refers to all instances that are solved by
at least one setting and one setting needed at least~$t$ seconds to solve
the instance.
In particular, the subset~$(0,7200)$ contains all instances that are solved
by at least one setting.

Table~\ref{tab:orbisack} summarizes the results of our experiments.
The columns of the table have the following meaning.
Column ``subset'' refers to the subset of instances as explained above;
``\#'' specifies the number of instances in the subset; column ``time''
provides the mean running time of the setting; ``solved'' reports on the
number of instances solved by a setting

\begin{table}[t]
  \caption{Comparison of extended formulations for orbisacks and separation of LCIs.}
  \label{tab:orbisack}
  \scriptsize
  \centering
  \begin{tabular*}{\textwidth}{@{}l@{\;\;\extracolsep{\fill}}rrrrrrrrr@{}}
    \toprule
    & & & & \multicolumn{6}{c}{max.\ 10 rows}\\
    \cmidrule{5-10}
    & & \multicolumn{2}{c}{default} & \multicolumn{2}{c}{EF1} & \multicolumn{2}{c}{EF2} & \multicolumn{2}{c}{EF3}\\
    \cmidrule{3-4}\cmidrule{5-6}\cmidrule{7-8}\cmidrule{9-10}
    subset & \# & time & solved & time & solved & time & solved & time & solved\\
    \midrule
(0,7200)    & (83) & \num{ 188.29} & \num{  79} & \num{ 213.82} & \num{  75} & \num{ 264.00} & \num{  71} & \num{ 213.40} & \num{  75}\\
(100,7200)  & (58) & \num{ 768.35} & \num{  54} & \num{ 918.88} & \num{  50} & \num{1195.15} & \num{  46} & \num{ 885.65} & \num{  50}\\
(1000,7200) & (41) & \num{1415.54} & \num{  37} & \num{1753.44} & \num{  33} & \num{2641.72} & \num{  29} & \num{1552.07} & \num{  33}\\
(3000,7200) & (28) & \num{1898.00} & \num{  24} & \num{2093.35} & \num{  20} & \num{3979.08} & \num{  16} & \num{1822.49} & \num{  20}\\
    \midrule
    & & & & \multicolumn{6}{c}{max.\ 30 rows}\\
    \cmidrule{5-10}
(0,7200)    & (85) & \num{ 205.21} & \num{  79} & \num{ 269.01} & \num{  72} & \num{ 268.94} & \num{  73} & \num{ 285.02} & \num{  74}\\
(100,7200)  & (60) & \num{ 866.74} & \num{  54} & \num{1172.31} & \num{  47} & \num{1209.97} & \num{  48} & \num{1222.90} & \num{  49}\\
(1000,7200) & (45) & \num{1420.78} & \num{  39} & \num{2050.87} & \num{  32} & \num{2310.72} & \num{  33} & \num{2114.53} & \num{  34}\\
(3000,7200) & (35) & \num{1839.53} & \num{  29} & \num{2498.05} & \num{  22} & \num{3027.23} & \num{  23} & \num{2747.60} & \num{  24}\\
    \bottomrule
  \end{tabular*}
\end{table}

Observe that our extended formulation performs on average better than EF2.
The EF3 extended formulation, in contrast, has a very similar running time to EF1.
If only ten rows are enabled, our extended formulation tends to be slightly slower than EF3, but when we allow~$30$ rows the trend is inverted.
Note that the running times of the default setting change between using~10 and~30 rows, because the corresponding set of instances changes slightly.
However, none of the extended formulations, with either setting, have a better running time than the default \scip settings.
A possible explanation is that the extended formulations increase the problem size, and thus it takes longer to solve LP relaxations. 
To confirm this conjecture our experiments revealed that, with the extended formulations EF1, EF2, and EF3, the solver has to spend between~$4.4$,~$8.9$ and~$23.7\%$ more iterations, respectively, solving the LP relaxation at the root node.
Recall that EF1 is as basic as a sorting network can be, with only~$n$ comparisons, with no wires in common (System~\eqref{eq:EForbisack} shows here~$3n$ variable and~$5n$ constraints). 
In contrast, the polytope~$P$ from Section~\ref{sec:sorting-network-polytope} is much larger with~$\Oh{n\log{n}^2}$ variables and~$\Oh{n\log{n}}$ constraints.
The results for EF2 indicate that formulations that require more constraints might hinder the solving speed, as EF3 indicates that using more variables does not help either.
We conclude that the additional strength of LCIs via extended formulations is small in comparison to the more challenging LP relaxation and therefore refrained from implementing the extended formulation based on sorting networks for general knapsacks.

\subsection{Evaluation of the Separation Algorithm}
\label{sec:numericsSepa}

\noindent
In a second experiment, we evaluate whether an exact separation routine for
LCIs of sparse knapsacks reduces the running time of \scip when
solving general MIP problems.
To this end, we have run \scip on all instances of MIPLIB2017 with a time
limit of~\SI{1}{\hour} and extracted all instances for which \scip generates a
knapsack constraint with sparsity~3 or~4.
This results in a test set of~183 instances.
Note that this test set also contains instances in which no sparse
knapsacks are present in the original formulation, because \scip can turn
globally valid cutting planes into knapsack constraints.
As before, we remove instances from the test set that result in numerical
instabilities for the LP solver.
To assess the effect of separating LCIs for sparse knapsacks, we compare
our separation algorithm for LCIs with \scip's internal separation
algorithms using various settings. 

We encode settings via a string \texttt{m-M-ABC}, where the letters have
the following meaning.
A knapsack is classified as sparse if its sparsity~$\sigma$
satisfies~$\text{\texttt{m}} \leq \sigma \leq \text{\texttt{M}}$.
The letters~\texttt{A}, \texttt{B}, and~\texttt{C} describe the behavior of the separation routines
for LCIs for sparse knapsacks, for \scip's default cutting planes applied
for sparse knapsacks, and for \scip's default cutting planes applied for
non-sparse knapsacks, respectively.
The letters~\texttt{A}, \texttt{B}, and~\texttt{C} take values~0, R, or~S, where~0 means that the
corresponding cut is not separated, R means the cuts are separated only at
the root node, and~S means that cuts are separated at every fifth layer of
the branch-and-bound tree.
For example, setting 3-4-0RS means that a knapsack is considered sparse if
its sparsity is between~3 and~4, the exact separation of LCIs for sparse
knapsacks is disabled, \scip's default cutting planes for sparse knapsacks
are only separated at the root node, and \scip's default cutting planes for
non-sparse knapsacks are separated at the root node and within the tree.
\scip's default settings are resembled by \texttt{3-4-0RR}.

\begin{table}[t]
  \caption{Comparison of separation algorithms for LCIs with a time limit of~2 hours and sparsity~4.}
  \label{tab:knapsack2}
  \scriptsize
  \centering
  \begin{tabular*}{\textwidth}{@{}l@{\;\;\extracolsep{\fill}}rrrrrrrrrrr@{}}
    \toprule
    & & \multicolumn{2}{c}{4-4-0RR} & \multicolumn{2}{c}{4-4-SSS} & \multicolumn{2}{c}{4-4-S0R} & \multicolumn{2}{c}{4-4-0SR} & \multicolumn{2}{c}{4-4-SSR}\\
    \cmidrule{3-4}\cmidrule{5-6}\cmidrule{7-8}\cmidrule{9-10}\cmidrule{11-12}
    subset & \# & time & solved & time & solved & time & solved & time & solved & time & solved\\
    \midrule
(0,7200)    & (88) & \num{ 135.10} & \num{  84} & \num{ 131.60} & \num{  84} & \num{ 140.94} & \num{  86} & \num{ 129.78} & \num{  86} & \num{ 130.07} & \num{  86}\\
(100,7200)  & (54) & \num{ 689.51} & \num{  50} & \num{ 663.58} & \num{  50} & \num{ 735.82} & \num{  52} & \num{ 649.41} & \num{  52} & \num{ 645.49} & \num{  52}\\
(1000,7200) & (26) & \num{2095.39} & \num{  22} & \num{2015.03} & \num{  22} & \num{2349.22} & \num{  24} & \num{1942.71} & \num{  24} & \num{1902.45} & \num{  24}\\
(3000,7200) & (12) & \num{2885.25} & \num{   8} & \num{2777.98} & \num{   8} & \num{3854.45} & \num{  10} & \num{2577.57} & \num{  10} & \num{2463.05} & \num{  10}\\
(6000,7200) & (8) & \num{2680.59} & \num{   4} & \num{2454.50} & \num{   4} & \num{3930.99} & \num{   6} & \num{2161.04} & \num{   6} & \num{1995.31} & \num{   6}\\
    \bottomrule
  \end{tabular*}
\end{table}

\begin{table}[t]
  \caption{Comparison of separation algorithms for LCIs with a time limit of~4 hours and sparsity~4.}
  \label{tab:knapsack4}
  \scriptsize
  \centering
  \begin{tabular*}{\textwidth}{@{}l@{\;\;\extracolsep{\fill}}rrrrrrrrrrr@{}}
    \toprule
    & & \multicolumn{2}{c}{4-4-0RR} & \multicolumn{2}{c}{4-4-SSS} & \multicolumn{2}{c}{4-4-S0R} & \multicolumn{2}{c}{4-4-0SR} & \multicolumn{2}{c}{4-4-SSR}\\
    \cmidrule{3-4}\cmidrule{5-6}\cmidrule{7-8}\cmidrule{9-10}\cmidrule{11-12}
    subset & \# & time & solved & time & solved & time & solved & time & solved & time & solved\\
    \midrule
(0,14400)    & (97) & \num{ 211.38} & \num{  91} & \num{ 205.90} & \num{  91} & \num{ 215.10} & \num{  92} & \num{ 200.17} & \num{  92} & \num{ 200.54} & \num{  91}\\
(100,14400)  & (63) & \num{1083.88} & \num{  57} & \num{1046.26} & \num{  57} & \num{1112.36} & \num{  58} & \num{1003.73} & \num{  58} & \num{ 997.93} & \num{  57}\\
(1000,14400) & (35) & \num{3544.45} & \num{  29} & \num{3426.39} & \num{  29} & \num{3667.75} & \num{  30} & \num{3214.85} & \num{  30} & \num{3158.97} & \num{  29}\\
(3000,14400) & (21) & \num{6020.06} & \num{  15} & \num{5874.46} & \num{  15} & \num{6571.03} & \num{  16} & \num{5302.42} & \num{  16} & \num{5118.09} & \num{  15}\\
(6000,14400) & (16) & \num{6975.85} & \num{  10} & \num{6842.58} & \num{  10} & \num{8100.89} & \num{  11} & \num{6140.62} & \num{  11} & \num{5829.78} & \num{  10}\\
    \bottomrule
  \end{tabular*}
\end{table}

\paragraph{Sparsity~4}
In a first experiment, we focused on knapsacks of sparsity~4 with a time
limit of \SI{2}{\hour}.
Our experiments are summarized in Table~\ref{tab:knapsack2}; the meaning of
columns is analogous to Table~\ref{tab:orbisack}.
The reason for not including a smaller sparsity in this first experiment is
that, when inspecting \scip's source code, it seems that \scip's greedy
heuristics are capable to detect most minimal covers.
Therefore, we expected most benefits for knapsacks with a higher sparsity.

As we can see from Table~\ref{tab:knapsack2}, \scip benefits from a more
aggressive separation of cutting planes for knapsacks, because the running
time of the default setting \texttt{4-4-0RR} improves when using \texttt{4-4-SSS}
by~\SI{2.6}{\percent} on all solvable instances and up
to~\SI{8.4}{\percent} on the hardest instances in subset~$(6000,7200)$.
To better understand the impact of separation routines for sparse
knapsacks, we disabled separation of non-sparse knapsacks within the tree
and either separate \scip's default cutting planes or LCIs using our
implementation via the settings \texttt{4-4-0SR} or \texttt{4-4-S0R},
respectively.
We observe that separating \scip's default cutting planes improves on the
setting \texttt{4-4-SSS}, whereas only separating our LCIs degrades the
performance substantially.
The results indicate that, although LCIs are facet defining for sparse
knapsack polytopes, our separation routine can yield weaker cutting planes
than \scip's default heuristic separation routine.
A possible explanation for this behavior is that \scip's built-in separation
routines exploit GUB information in a more effective way, thus better
linking knapsack constraints with further problem information.

When enabling both \scip's separation routines and our LCIs in setting
\texttt{4-4-SSR}, however, the performance of \texttt{4-4-0SR} remains
approximately unchanged for all solvable instances and improves with the
instances becoming more difficult.
For example, for subset~$(1000,7200)$, the performance improves
by~\SI{2.1}{\percent} and for the most difficult instances in
subset~$(6000,7200)$ an improvement of~\SI{7.7}{\percent} can be observed.
The separation of LCIs thus seems to be more effective for difficult
instances.

To confirm this conjecture, we have conducted analogous experiments with a
time limit of~\SI{4}{\hour} per instance, which are summarized in
Table~\ref{tab:knapsack4}.
This table has a similar pattern as Table~\ref{tab:knapsack2}, and indeed,
for the most challenging instances the performance of \texttt{4-4-0SR} can
be improved by also separating LCIs by~\SI{5.1}{\percent}.
We therefore conclude that separating facet-defining LCIs is most helpful
for difficult instances, where it can lead to great performance
improvements.
Easier instances, however, can effectively be solved by heuristically
separating lifted cover inequalities that incorporate GUB information.

\begin{table}[t]
  \caption{Comparison of separation algorithms for LCIs with a time limit of~2 hours and sparsity~3 or~4.}
  \label{tab:knapsack2a}
  \scriptsize
  \centering
  \begin{tabular*}{\textwidth}{@{}l@{\;\;\extracolsep{\fill}}rrrrrrrrrrr@{}}
    \toprule
    & & \multicolumn{2}{c}{3-4-0RR} & \multicolumn{2}{c}{3-4-SSS} & \multicolumn{2}{c}{3-4-S0R} & \multicolumn{2}{c}{3-4-0SR} & \multicolumn{2}{c}{3-4-SSR}\\
    \cmidrule{3-4}\cmidrule{5-6}\cmidrule{7-8}\cmidrule{9-10}\cmidrule{11-12}
    subset & \# & time & solved & time & solved & time & solved & time & solved & time & solved\\
    \midrule
(0,7200)    & (88) & \num{ 135.10} & \num{  84} & \num{ 140.83} & \num{  83} & \num{ 144.10} & \num{  85} & \num{ 129.60} & \num{  86} & \num{ 133.26} & \num{  85}\\
(100,7200)  & (54) & \num{ 689.51} & \num{  50} & \num{ 734.05} & \num{  49} & \num{ 762.07} & \num{  51} & \num{ 647.15} & \num{  52} & \num{ 675.92} & \num{  51}\\
(1000,7200) & (28) & \num{1885.76} & \num{  24} & \num{2136.79} & \num{  23} & \num{2320.46} & \num{  25} & \num{1749.03} & \num{  26} & \num{1951.04} & \num{  25}\\
(3000,7200) & (14) & \num{2380.22} & \num{  10} & \num{3056.59} & \num{   9} & \num{3768.98} & \num{  11} & \num{2138.39} & \num{  12} & \num{2679.82} & \num{  11}\\
(6000,7200) & (9) & \num{2101.11} & \num{   5} & \num{2766.90} & \num{   4} & \num{4174.44} & \num{   6} & \num{1699.63} & \num{   7} & \num{2428.77} & \num{   6}\\
    \bottomrule
  \end{tabular*}
\end{table}

\paragraph{Sparsity~3 and~4}
In a second experiment, we also considered knapsacks with sparsity~3.
Table~\ref{tab:knapsack2a} shows the summarized results.
In contrast to exclusively using our separation routine of LCIs for
knapsacks of sparsity~4, separating LCIs does not improve the performance
of \texttt{3-4-0SR}.
A possible explanation for this behavior is that, as mentioned above,
\scip's built-in heuristics for separating lifted cover inequalities are
good for knapsacks of sparsity~3.
For finding a violated LCI, it is thus not necessary to enumerate all
(families of) minimal covers and their possible liftings.
Although the time for finding all LCIs for sparse knapsacks is usually
small, it is still a disadvantage as it imposes, in particular for the easy
instances, some avoidable overhead.
Moreover, \scip's strategies for incorporating GUB information into cover
inequalities could be stronger than our strategy.

Another explanation is that non-fully lifted cover inequalities tend to be
sparser than the exact LCIs computed by our separation routine.
This can have different implications on the solving process.
For example, within the subset~$(1000,7200)$, we observed an instance
(neos-1456979) for which the number of separated knapsack inequalities in
the settings \texttt{3-4-0SR} and \texttt{3-4-SSR} deviated only slightly.
In the former case, approximately~555 LPs needed to be solved per
node of the branch-and-bound tree, whereas in the second setting
approximately~1660 LPs needed to be solved.
Our denser LCIs therefore presumably create LPs that are more difficult to
solve.
For another instance (neos-555884), we noted that \scip spends more time
separating cutting planes at the root node within setting \texttt{3-4-0SR}
than in setting \texttt{3-4-SSR}.
This caused that the root node had a much better dual bound in the former
setting than in the latter setting.
Since \scip separates most cutting planes at the root node and not within
the branch-and-bound tree, setting \texttt{3-4-SSR} had troubles improving
the dual bound within the tree.
That is, although more and potentially stronger cutting planes are
separated when our separation routine is enabled, side effects within the
solver can cause that this results in a worse solving time.

\paragraph{Conclusions}

In this paper, we proposed to treat sparse knapsacks differently than
general knapsacks, because they admit a polynomial time separation algorithm
for LCIs.
Our goal was to investigate whether the special treatment allows to solve
general MIPs containing sparse knapsacks faster.
Based on our experiments, we could show that there is indeed a difference
between sparse knapsacks and general knapsacks.
The former greatly benefit from separating cutting planes within the
branch-and-bound tree, whereas the latter can be handled more effectively
by separating cutting planes only at the root node.
A potential explanation for this behavior is that we are currently missing
strong cutting planes for general knapsacks, i.e., the increase of the size of
LP relaxations caused by separated cutting planes is not compensated by the
tightened feasible region.
This explanation is supported by our experiments for the exact separation
of LCIs for knapsacks of sparsity~4, because in particular the hard
instances greatly benefit from our exact separation mechanism.
For~3-sparse knapsacks though, our exact separation algorithm seems to
hinder branch-and-bound solvers, possibly because LCIs are denser than
partially lifted inequalities.
To better understand the effect of exact separation for sparse knapsacks,
the following directions would be interesting for future research.
On the one hand, we noted that \scip's cutting planes for very sparse
knapsacks ($\sparsity = 3$) are already very effective, whereas we can
benefit from an exact separation of LCIs for knapsacks with~$\sparsity = 4$.
It would thus be interesting to investigate whether an exact separation
for knapsacks with an even higher~$\sparsity$-value further improves upon
the performance of the heuristically separated cutting planes.
On the other hand, we discussed, next to LCIs, also LCIs that incorporate GUB
information.
Since GUBs are not part of a sparse knapsacks itself, but rather arise from
additional problem structure, GUB-LCIs cannot be parameterized just based
on the coefficients of the knapsacks.
It would therefore be interesting to develop means to enhance
(parameterized) LCIs with GUB information in the most effective way.

Next to the separation algorithms of LCIs, we also discussed extended
formulations to model separation polyhedra.
Our numerical results indicated, however, that we can not expect an
improvement of running times when replacing separation algorithms by
extended formulations.
A possible explanation is that the extended formulations increase the
problem size too much without sufficiently strengthening the LP relaxation.
We note, however, that for some applications extended formulations of
particular symmetry handling constraints could be used successfully~\cite{ValidiBuchanan2022}.
Those extended formulations do not only handle symmetries, but also exploit further
problem information.
It would thus be interesting to investigate whether a coupling of extended
formulations of separation polyhedra with additional problem information
(such as GUBs) allows to strengthen the LP relaxation sufficiently such
that separation algorithms can be replaced by extended formulations.
This is out of scope of this article though.

\bigskip
\textbf{Acknowledgements}
We thank two anonymous reviewers for their valuable feedback.

\clearpage
\appendix
\section{Statistics on Sparse Knapsacks}
\label{sec:appendix}

In the introduction, we have provided statistics on the sparsity of
knapsacks arising in MIPLIB~2017.
To find these numbers, we have modified the code described
in Section~\ref{sec:numerics} such that it only collects the sparsity
information of a knapsack but does not exploit sparsity when separating
(lifted) cover inequalities.
Using the same infrastructure as for the experiments in
Section~\ref{sec:numerics}, the modified code has been run on all instances
of MIPLIB~2017 with a time limit of two hours, and we collected the
sparsity information of all knapsacks that have been identified by \scip
during the solving process.
The reported numbers thus do not only contain information about the
knapsack constraints present in the original MIPLIB instances, but also
about knapsacks that have been added while solving the problems, e.g., by
transforming cutting planes into constraints.

\bibliographystyle{elsarticle-num}



\end{document}